\newtheorem{thm}{Theorem}[section]
\newtheorem{cor}[thm]{Corollary}
\newtheorem{lem}[thm]{Lemma}
\newtheorem{conj}[thm]{Conjecture}
\newtheorem*{conj*}{Conjecture}
\newtheorem*{openproblem*}{Problem}
\newtheorem*{quest*}{Question}
\newtheorem*{problem*}{Problem}
\newtheorem*{claim*}{Claim}
\theoremstyle{definition}
\theoremstyle{remark}
\newtheorem{rem}[thm]{Remark}
\newcommand{\bC}{\mathbb{C}}
\newcommand{\bQ}{\mathbb{Q}}
\newcommand{\bR}{\mathbb{R}}
\newcommand{\bZ}{\mathbb{Z}}
\newcommand\Diff{\mathrm{Diff}}
\newcommand\BDiff{\mathrm{BDiff}}
\newcommand\dDiff{\mathrm{Diff}^{\delta}}
\newcommand\BdDiff{\mathrm{BDiff}^{\delta}}
\newcommand{\BdH}{\mathrm{B}\text{\textnormal{Homeo}}^{\delta}}
\let\c@equation\c@thm
\numberwithin{equation}{section}
\title[]{On flat manifold bundles and the connectivity of Haefliger's classifying spaces}
\author{Sam Nariman}
\address{Department of Mathematics\\
  Purdue University\\
150 N. University Street\\
West Lafayette, IN 47907-2067\\
}
\email{snariman@purdue.edu}
\begin{document}
\begin{abstract}
We investigate a conjecture due to Haefliger and Thurston in the context of foliated manifold bundles. In this context,  Haefliger-Thurston's conjecture predicts that every $M$-bundle over a manifold $B$ where $\text{dim}(B)\leq \text{dim}(M)$ is cobordant to a flat $M$-bundle. In particular, we study the bordism class of flat $M$-bundles over low dimensional manifolds, comparing a finite dimensional Lie group $G$ with $\Diff_0(G)$. 
\end{abstract}
\maketitle
\section{Introduction}
To build a classifying space for codimension $n$ foliations, Haefliger considered a more relaxed structure known as codimension $n$  Haefliger structures and built a classifying space $\mathrm{B}\Gamma^{r,+}_n$ for them (e.g. see  \cite{haefliger1971homotopy, bott1972lectures}) where $\Gamma^{r,+}_n$ is the etale groupoid of germs of local orientation preserving $C^r$-diffeomorphisms of $\bR^n$. There is a natural map 
\[
\nu\colon \mathrm{B}\Gamma^{r,+}_n\to \mathrm{BGL}^+_n(\bR),
\]
which classifies the oriented normal bundle to the $C^r$-Haefliger structures of codimension $n$. If we drop the regularity $r$, we mean the smooth case. 

Studying the homotopy type of the classifying space $\mathrm{B}\Gamma^{r,+}_n$ has deep consequences in foliation theory. In particular, it implies integrability of plane fields up to homotopy in a range of dimensions because of the h-principle theorems due to Thurston about Haefliger's structures (\cite{MR0370619,MR0425985}).  Let $\overline{\mathrm{B}\Gamma}^{r}_n$ denote the homotopy fiber of $\nu$. This space classifies those Haefliger structures with the trivial normal bundle. Haefliger used Gromov-Phillips'  theorem in \cite{haefliger1971homotopy} to show that $\overline{\mathrm{B}\Gamma}^{r}_n$ is at least $n$-connected. Thurston first proved (\cite{thurston1974foliations})  that the identity component of the smooth diffeomorphism group of any compact manifold is a simple group and used it to show that that $\overline{\mathrm{B}\Gamma}_n$ is $(n+1)$-connected and shortly after Mather (\cite[Section 7]{MR0356129}) proved the same statement for $\overline{\mathrm{B}\Gamma}^{r}_n$ when $r\neq \text{dim}(M)+1$. 

Haefliger introduced and calculated {\it differentiable} cohomology of $\overline{\mathrm{B}\Gamma}_n$  in \cite{MR660658} and showed that it vanishes up to  degree $2n$. And he speculated (\cite[Section 6]{MR660658}) the possibility that  $\overline{\mathrm{B}\Gamma}_n$ might be $2n$-connected.  Thurston also stated (\cite{thurston1974foliations}) this range of connectivity for $\overline{\mathrm{B}\Gamma}^r_n$ as a conjecture. Using Mather-Thurston's theory (see \cite{mather2011homology, nariman2020thurston}), one could equivalently state this conjecture in the context of manifold bundles. Let $\Diff^r(M)$ denote the group of $C^r$-orientation preserving diffeomorphisms of a smooth manifold $M$ with the $C^r$-Whitney topology. We decorate it with superscript $\delta$ and subscript $c$ if we consider the same group with discrete topology and  its subgroup of compactly supported diffeomorphisms respectively. The identity homomorphism between the groups $\Diff_c^r(M)^{\delta}\to \Diff_c^r(M)$ induces the map between classifying spaces
   \begin{equation}\label{eta}
   \eta: \BDiff_c^r(M)^{\delta}\to \BDiff_c^r(M).
   \end{equation}

   \begin{conj}[Haefliger-Thurston]\label{HT}
Let $M$ be an oriented closed manifold. The  map $\eta$ is a homology isomorphism in degrees less than or equal to $\text{dim}(M)$ and is a surjection on homology in degree $\text{dim}(M)+1$. 
\end{conj}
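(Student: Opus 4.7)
The plan is to reduce \cref{HT} to the long-standing $2n$-connectivity conjecture for $\overline{\mathrm{B}\Gamma}_n^r$ via the Mather-Thurston theorem, as formulated in \cite{mather2011homology, nariman2020thurston}. Mather-Thurston theory produces, for an oriented closed $n$-manifold $M$, a homology equivalence between the homotopy fibre of $\eta$ and the space of sections of a fibre bundle $E \to M$ whose fibre is $\overline{\mathrm{B}\Gamma}_n^r$ and which is associated to the oriented frame bundle of $M$ via the natural $\mathrm{GL}_n^+(\bR)$-action on $\overline{\mathrm{B}\Gamma}_n^r$. This is the only general device that translates statements about $\BDiff_c^r(M)^{\delta}$ into questions about the Haefliger classifying space, so essentially every known approach must pass through it.

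First I would set up this identification, checking in particular that the Mather-Thurston equivalence applies for the compactly supported diffeomorphism group under the hypotheses on $M$ (oriented, closed). Second I would invoke the conjectural $2n$-connectivity of $\overline{\mathrm{B}\Gamma}_n^r$: by elementary obstruction theory, if the fibre of a bundle over an $n$-manifold is $k$-connected with $k \geq 2n$, then the space of sections is $(k-n)$-connected, hence in our case at least $n$-connected. Third I would feed this into the Serre spectral sequence of the fibration
\[
\mathrm{hofib}(\eta) \longrightarrow \BDiff_c^r(M)^{\delta} \longrightarrow \BDiff_c^r(M).
\]
Since the fibre then has reduced homology vanishing through degree $n$, the map $\eta$ induces an isomorphism on $H_i$ for $i \leq n$ and a surjection on $H_{n+1}$, exactly as \cref{HT} predicts.

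The entire difficulty is concentrated in the second step: there is no available proof of $2n$-connectivity of $\overline{\mathrm{B}\Gamma}_n^r$ for $n \geq 2$. The best general bound is $(n+1)$-connectivity, obtained by Thurston and Mather from simplicity of $\Diff_0$, and Haefliger's subsequent $2n$-range vanishing theorem lives only at the level of \emph{differentiable} cohomology, which is too coarse to control the homotopy type. Promising avenues of attack include refining the differentiable-cohomology vanishing to singular cohomology with twisted coefficients; constructing enough codimension-$n$ foliations with prescribed holonomy (via surgery or plumbing) to realize the required classes by explicit flat bundles; or running a scanning/group-completion argument that exploits a stable splitting of $\overline{\mathrm{B}\Gamma}_n$. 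A more modest fallback, and the one adopted in the present paper, is to verify the conjecture in geometrically meaningful special cases -- flat bundles whose structure group is a finite dimensional Lie group, or flat bundles whose holonomy is localized in a ball -- isolating where the genuine difficulty lies.
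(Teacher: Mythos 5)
The statement you were asked to ``prove'' is Conjecture~\ref{HT}, and the paper itself does not prove it---it is an open conjecture, stated as such, and the rest of the paper only investigates low-dimensional and special-case predictions of it. You correctly recognized this: your outline is not a proof but an honest account of the standard reduction, and you explicitly flag that the second step (the conjectural $2n$-connectivity of $\overline{\mathrm{B}\Gamma}_n^r$) has no available proof. That is the right answer.

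Your reduction matches the paper's own framing. The paper's introduction explains that via Mather--Thurston theory (\cite{mather2011homology, nariman2020thurston}) the conjecture is equivalent to the $2n$-connectivity of $\overline{\mathrm{B}\Gamma}_n^r$, and that Thurston and Mather established only $(n+1)$-connectivity from simplicity of $\Diff_0$, with Haefliger's $2n$-range vanishing holding only in \emph{differentiable} cohomology. Your three-step schema (Mather--Thurston identification of $\mathrm{hofib}(\eta)$ as a section space of a bundle with fibre $\overline{\mathrm{B}\Gamma}_n^r$; obstruction-theoretic connectivity of the section space; Serre spectral sequence for the fibration over $\BDiff_c^r(M)$) is exactly the standard translation. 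One small refinement worth noting: for a closed $n$-manifold the section space of a bundle with $k$-connected fibre is $(k-n)$-connected, so $k \geq 2n$ gives $n$-connectivity of the fibre of $\eta$, which by the Serre spectral sequence yields isomorphism on $H_i$ for $i \leq n$ and surjection on $H_{n+1}$---precisely the range claimed. You have the bookkeeping right.

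Your concluding paragraph listing fallback strategies (comparison with finite-dimensional Lie groups, localizing holonomy in a ball, verifying special cases in low degree) is in fact a reasonable summary of what Sections~\ref{H_3}, \ref{flat}, and \ref{cut} of the paper actually do. There is no gap to report because you did not claim to have closed the argument; the genuinely missing ingredient you identified is indeed the one the paper treats as the central open problem.
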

Geometrically, this conjecture is equivalent to saying that for every smooth $M$-bundle $M\to E\to B$ where $B$ is a manifold and $\text{dim}(B)\leq \text{dim}(M)$, there exists a bordism  $W$ from $B$ to another manifold $B'$ and an $M$-bundle $M\to K\to W$ such that when it is restricted to $B$, it is isomorphic to $E\to B$ and when it is restricted to $B'$, it is a flat $M$-bundle i.e. it is induced by a representation $\pi_1(B')\to \Diff^\delta(M)$. 
\begin{rem}
This conjecture can be stated for $C^r$-diffeomorphisms for all regularities $r$. In fact for $r=0$, it is a consequence of Mather's theorem (\cite{MR0288777}) that $\eta$ induces a homology isomorphism in all degrees and the same holds for $r=1$  as a consequence of Tsuboi's remarkable theorem (\cite{tsuboi1989foliated}).
\end{rem}

This conjecture in the smooth category seems to be out of reach at this point but in this paper, we want to investigate certain low dimensional predictions of this conjecture. In particular, we consider certain cases to investigate surjectivity and injectivity of $\eta_*$ in low homological degrees.
\begin{rem}
Using the work of Peter Greenberg (\cite{MR1200422}) we shall prove in a separate paper \cite{Nariman}, new connectivity results for the curious case of PL-foliations in codimension $2$. Then  using the version of Mather-Thurston's theorem for PL homeomorphisms (\cite{nariman2020thurston}) due to the author,  we prove the perfectness of PL homeomorphisms of surfaces that are isotopic to the identity which answers a question (\cite[Section 3]{MR267589}) of Epstein in dimension $2$. 
\end{rem}

\subsection{On the surjectivity part of \Cref{HT} in low degrees.} The first nontrivial homological degree is the case $*=3$. The fact that $\eta_*$ induces an isomorphism in $*=1$ and it is surjective for $*=2$ is a consequence of Thurston's theorem (\cite{thurston1974foliations}) that the identity component $\dDiff_0(M)$ is a simple group for any closed smooth manifold $M$. Let $\overline{\BDiff(M)}$ denote the homotopy fiber of the map $\eta$. Then Thurston's simplicity result implies that $H_1(\overline{\BDiff(M)};\bZ)=0$ which in particular leads to surjectivity of $\eta_*$ for $*\leq 2$. 

When the dimension of $M$ is $2$ or $3$, we know a lot about the homotopy type of $\Diff_0(M)$. In particular, in dimension $2$, if a surface $\Sigma_g$ has genus $g$ larger than $1$, then $\Diff_0(\Sigma_g)$ is contractible (\cite{earle1969fibre}). So the surjectivity of $\eta_*$ for the identity component is obvious. In \cite[Theorem 3.17]{nariman2015stable} for the entire group $\Diff(\Sigma_g)$, we also proved that for the case of surfaces, the natural map
\[
\eta_*\colon H_3(\BdDiff(\Sigma_g);\bZ)\to H_3(\BDiff(\Sigma_g);\bZ),
\]
is surjective. It is known that for  $g>3$, the group $H_3(\BDiff(\Sigma_g);\bZ)$ is torsion but nonetheless we have surjectivity in degree $3$ with integral coefficients. 
\begin{rem}
For a connected finite dimensional Lie group $G$, the group $H_3(\mathrm{B}G;\bZ)$ is finite. Hence by \cite[Corollary  2 section 3]{milnor1983homology} (see also \cite[Lemma 6]{milnor1983homology}) the map $H_3(\mathrm{B}G^{\delta};\bZ)\to H_3(\mathrm{B}G;\bZ)$ is surjective.
\end{rem}

Therefore, the first nontrivial case of \Cref{HT} for the identity component $\Diff_0(M)$  is  when $\text{dim}(M)=3$ and $*=3$. As a consequence of the resolution of the generalized Smale's conjecture (\cite{hatcher1983proof, MR0420620, MR0448370, gabai2001smale, MR2976322, MR3024309, bamler2019ricci, bamler2019diffeomorphisms}) we know about the homotopy type of $\Diff_0(M)$ when $M$ is a geometric $3$-manifold. In particular, in many cases we have $H_3(\BDiff_0(M);\bQ)=0$ but one interesting example is the case $M\cong S^1\times S^2$ where Hatcher's theorem (\cite{hatcher1981diffeomorphism}) implies that $H_3(\BDiff_0(S^1\times S^2);\bQ)=\bQ$. In \Cref{H_3}, we prove that the natural map
\[
\eta_*\colon H_3(\BdDiff_0(S^1\times S^2);\bZ)\to H_3(\BDiff_0(S^1\times S^2);\bZ),
\]
is surjective. And with rational coefficients, we prove the following more general result.
\begin{thm}\label{H3} Let $M$ be a closed manifold such that $\text{dim}(M)\nequiv 1 \bmod 4$, then the map 
\[
\eta_*\colon H_3(\BdDiff_0(M);\bQ)\to H_3(\BDiff_0(M);\bQ),
\]
is surjective.
\end{thm}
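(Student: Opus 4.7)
The plan is to combine the Mather-Thurston theorem with the Serre spectral sequence of the fibration
\[
\overline{\mathrm{B}\Diff_0(M)}\longrightarrow \mathrm{B}\Diff_0(M)^{\delta}\longrightarrow \mathrm{B}\Diff_0(M).
\]
Since $\Diff_0(M)$ is path-connected, the base is simply connected, so the rational Serre spectral sequence is untwisted and $\eta_{*}$ in degree $3$ is the edge homomorphism onto $E^{\infty}_{3,0}\subseteq H_{3}(\mathrm{B}\Diff_0(M);\bQ)$. Only two differentials can leave $E^{r}_{3,0}$, namely $d_{2}\colon E^{2}_{3,0}\to E^{2}_{1,1}$ and $d_{3}\colon E^{3}_{3,0}\to E^{3}_{0,2}$; Thurston's simplicity theorem implies that $\Diff_0(M)^{\delta}$ is perfect, so $H_{1}(\overline{\mathrm{B}\Diff_0(M)};\bQ)=0$ and $d_{2}=0$. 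The task therefore reduces to proving $H_{2}(\overline{\mathrm{B}\Diff_0(M)};\bQ)=0$.

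To analyze this $H_{2}$, I would apply the Mather-Thurston theorem to obtain a homology equivalence between $\overline{\mathrm{B}\Diff_0(M)}$ and the section space $\Gamma(\overline{E}_{M})$, where $\overline{E}_{M}\to M$ is the bundle associated to the tangent frame bundle with fiber Haefliger's space $\overline{\mathrm{B}\Gamma}_{n}$ (with $n=\dim M$). Since this fiber is $(n+1)$-connected by Thurston-Mather and $\dim M=n$, a standard Postnikov-tower and obstruction-theory argument shows that $\Gamma(\overline{E}_{M})$ is simply connected, that the first relevant $k$-invariant lies in $H^{n+3}(M;\pi_{n+2}(\overline{\mathrm{B}\Gamma}_{n}))=0$ for dimensional reasons, and that the structure group $\mathrm{GL}^{+}_{n}$ acts trivially on $\pi_{n+2}(\overline{\mathrm{B}\Gamma}_{n})$ because $\mathrm{BGL}^{+}_{n}$ is simply connected. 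Combined with the rational Hurewicz theorem this gives
\[
H_{2}\bigl(\overline{\mathrm{B}\Diff_0(M)};\bQ\bigr)\ \cong\ H^{n}\bigl(M;\ \pi_{n+2}(\overline{\mathrm{B}\Gamma}_{n})\otimes\bQ\bigr).
\]

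The main step — and the point at which the hypothesis $n\not\equiv 1\pmod 4$ enters — is to prove that $\pi_{n+2}(\overline{\mathrm{B}\Gamma}_{n})\otimes\bQ=0$ in this range of dimensions. By rational Hurewicz applied to the $(n+1)$-connected space $\overline{\mathrm{B}\Gamma}_{n}$, this is equivalent to $H_{n+2}(\overline{\mathrm{B}\Gamma}_{n};\bQ)=0$, and dually to $H^{n+2}(\overline{\mathrm{B}\Gamma}_{n};\bQ)=0$. This is an algebraic question about the relative truncated Weil algebra $WO_{n}$ of Bott-Haefliger, which computes $H^{*}(\overline{\mathrm{B}\Gamma}_{n};\bR)$. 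The hard part is the explicit bookkeeping: one enumerates the cocycles of $WO_{n}$ in degree $n+2$ modulo coboundaries and modulo classes pulled back from $\mathrm{BGL}^{+}_{n}$ (Pontryagin classes $p_{i}$ in degrees $4i$ and the Euler class in degree $n$ when $n$ is even), combined with the odd transgressive generators $h_{2i-1}$. A combinatorial degree count shows that a non-trivial cocycle in degree $n+2$ arises precisely when $n\equiv 1\pmod 4$, so outside this residue class the group vanishes. Feeding this vanishing back through the two reductions above yields the surjectivity of $\eta_{*}$ in degree $3$.
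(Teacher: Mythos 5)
Your reduction to the Serre spectral sequence of $\overline{\BDiff_0(M)}\to \BdDiff_0(M)\to \BDiff_0(M)$ and to the differential $d_3$ (after killing $d_2$ by perfectness of $\dDiff_0(M)$) is the same first step as the paper takes. But the way you propose to kill $d_3$ contains a genuine gap. You reduce to the claim $H_2(\overline{\BDiff_0(M)};\bQ)=0$, which via Mather--Thurston and the $(n+1)$-connectivity of $\overline{\mathrm{B}\Gamma}_n$ amounts to $\pi_{n+2}(\overline{\mathrm{B}\Gamma}_n)\otimes\bQ=0$, i.e.\ to rational $(n+2)$-connectivity of $\overline{\mathrm{B}\Gamma}_n$. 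This is not known for any $n\geq 2$; it is precisely (a piece of) the open Haefliger--Thurston connectivity conjecture that the paper is investigating, so no proof of the theorem can rest on it. Your proposed mechanism for proving it is also not valid: the truncated Weil algebra $WO_n$ (equivalently, Gelfand--Fuks cohomology of formal vector fields) does \emph{not} compute $H^*(\overline{\mathrm{B}\Gamma}_n;\bR)$; it only maps to it by the characteristic homomorphism, and Haefliger's vanishing of this ``differentiable'' cohomology up to degree $2n$ is exactly why the $2n$-connectivity is conjectured rather than known. Moreover, the Gelfand--Fuks complex has no classes in any degree below $2n+1$, so your claimed dichotomy producing a nontrivial cocycle in degree $n+2$ exactly when $n\equiv 1\bmod 4$ cannot come out of that computation; the residue hypothesis does not enter there at all.

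In the paper the hypothesis $n\nequiv 1\bmod 4$ enters elsewhere, and the argument never claims $H_{n+2}(\overline{\mathrm{B}\Gamma}_n;\bQ)=0$. Since $\BDiff_0(M)$ is simply connected, $H_3(\BDiff_0(M);\bZ)$ is hit by $\pi_3(\BDiff_0(M))\cong\pi_2(\overline{\BDiff_0(M)})$, so it suffices to show the Hurewicz map $h\colon \pi_2(\overline{\BDiff_0(M)})\to H_2(\overline{\BDiff_0(M)};\bQ)$ is trivial. One composes $h$ with the Mather--Thurston isomorphism $H_2(\overline{\BDiff_0(M)};\bZ)\cong H_{n+2}(\overline{\mathrm{B}\Gamma}_n;\bZ)$ and the map $i\colon H_{n+2}(\overline{\mathrm{B}\Gamma}_n;\bZ)\to H_{n+2}(\mathrm{B}\Gamma^+_n;\bZ)$; because $\BdDiff_0(M)$ is aspherical, $\pi_2(\BdDiff_0(M))=0$ and the composite factors through zero. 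Thus one only needs $i$ to be \emph{rationally injective} in degree $n+2$, which follows from the Serre spectral sequence of $\overline{\mathrm{B}\Gamma}_n\to\mathrm{B}\Gamma^+_n\to\mathrm{B}\mathrm{GL}^+_n(\bR)$ because the only possible obstruction is the transgression from $H_{n+3}(\mathrm{B}\mathrm{GL}^+_n(\bR);\bQ)$, and this group vanishes exactly when $n\nequiv 1\bmod 4$. That use of the asphericity of $\BdDiff_0(M)$, replacing any vanishing statement about $H_{n+2}(\overline{\mathrm{B}\Gamma}_n;\bQ)$, is the key idea missing from your argument.
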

\begin{rem}
We expect that one might be able to drop the hypothesis $\text{dim}(M)\nequiv 1 \bmod 4$. As we shall see in \Cref{H_3},  an affirmative answer to a question posed by Vogt (\cite[Problem F.2.1]{MR1271828}) is a step towards making this hypothesis unnecessary. 
\end{rem}
\subsection{On the injectivity part of \Cref{HT} in low degrees} The injectivity part specially for regularities seems to be notoriously difficult. We instead try to investigate some of its predictions. The only results known in this direction is due to Tsuboi in low regularities (see \cite{MR877328, tsuboi1989foliated}). His work implies that 
\[
\eta\colon \mathrm{B}\Diff^{r,\delta}(M)\to \BDiff^r(M),
\]
induces a homology isomorphism in all degrees for $r=1$ and in general it induces a homology isomorphism in degrees less $m$ where $r<\frac{n+1}{m}-1$. In particular, for $C^{\infty}$-diffeomorphisms, it is still open that whether 
\[
\eta_*\colon H_2(\mathrm{B}\Diff^{\delta}(M);\bZ)\to H_2(\BDiff(M);\bZ),
\]
is injective. To prove these injectivity results, Tsuboi (\cite{MR877328, tsuboi1989foliated}) extensively studied the vanishing of $H_*(\overline{\BDiff(M)};\bZ)$ in low homological degrees. One presumably easier question would be whether $H_k(\overline{\BDiff^r(M)};\bZ)$ is independent of $M$ for $k\leq \text{dim}(M)$ or for a codimension zero embedding $N\hookrightarrow M$, whether the natural map 
\[
H_k(\overline{\BDiff^r(N, \partial)};\bZ)\to H_k(\overline{\BDiff^r(M, \partial)};\bZ),
\]
is surjective for $k\leq \text{dim}(M)$. We shall follow this perspective in low dimensions. There are natural ways to build cycles in group homology of diffeomorphism groups and use Haefliger-Thurston's conjecture to predict that they are trivial. Proving these predictions, in each separate example seem to already be very nontrivial and we think they are interesting on their own. For example, Tsuboi used the flow of vector fields to build abelian cycles (\cite{MR928395, 928395}) in diffeomorphism groups and proved their triviality for codimension one foliations and conjectured the same in all dimensions. 


Here we consider another source of natural cycles in group homology of diffeomorphisms by letting a Lie group $G$ act on itself. Let $G$ be a connected Lie group. The group homology or the homology of the space $\mathrm{B}G^{\delta}$ has been extensively studied for certain Lie groups since it is related to Milnor's conjecture (\cite{milnor1983homology}) and also to scissors congruence (see \cite{MR928063, MR982639, MR856093} and references therein).  It is a deep result of Sah-Wagoner (\cite[Theorem 1.28]{MR646087}) that for any connected Lie group $G$, the second group homology $H_2(G,\bZ)$ has a quotient group $K_2(\bC)^+$, the positive part of the second K-group of $\bC$ which in particular is a $\bQ$-vector space of dimension equal to the continuum. So if we consider the natural map 
\[
\mathrm{B}G^{\delta}\to \BDiff^{r,\delta}_0(G),
\]
we can map nontrivial cycles in $H_*(\mathrm{B}G^{\delta};\bZ)$ to $H_*(\BDiff^{r,\delta}_0(G);\bZ)$. As we shall see, in low regularity, we have

\begin{thm}\label{lowreg}
For all compact or complex semi-simple Lie groups, the map
\[
 H_*(\mathrm{B}G^{\delta};\bQ)\to H_*(\BdH_0(G);\bQ),
\]
is a trivial map. The same holds for $\Diff^1_0(G)$.
\end{thm}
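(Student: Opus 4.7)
The plan is to reduce the assertion, via the Mather-Thurston equivalence in regularity $r=0$ (respectively $r=1$), to a classical Chern-Weil vanishing for flat $G$-bundles.

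First I would use Mather's theorem to replace the discrete target by its continuous counterpart: since $\BdH_0(G)\to\BHomeo_0(G)$ is an integral homology isomorphism, and likewise $\mathrm{B}\Diff^{1,\delta}_0(G)\to\mathrm{B}\Diff^1_0(G)$ by Tsuboi's theorem, the map in the statement is trivial on rational homology if and only if the composition
\[
\mathrm{B}G^{\delta}\lra \BdH_0(G)\lra \BHomeo_0(G)
\]
is. The composition is induced by the group homomorphism $g\mapsto L_g$, which factors through $G$ with its Lie group topology; hence on classifying spaces it factors as
\[
\mathrm{B}G^{\delta}\lra \mathrm{B}G\lra \BHomeo_0(G).
\]
So it suffices to prove that $H_*(\mathrm{B}G^{\delta};\bQ)\to H_*(\mathrm{B}G;\bQ)$ is zero in positive degrees when $G$ is compact or complex semi-simple.

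This last vanishing is the classical Chern-Weil obstruction. For $G$ compact, $H^*(\mathrm{B}G;\bR)$ is the ring of $\mathrm{Ad}$-invariant polynomials on $\mathfrak{g}$ by Chern-Weil's theorem; for $G$ complex semi-simple the same description holds after passing to the maximal compact form (with respect to which $\mathrm{B}G$ is homotopy equivalent), and the invariant polynomials on $\mathfrak{g}$ still serve as Chern-Weil representatives for any $G$-connection. Any such class is represented by a de Rham form assembled from the curvature, and on a flat bundle this curvature vanishes identically. Hence $H^*(\mathrm{B}G;\bR)\to H^*(\mathrm{B}G^{\delta};\bR)$ is zero in positive degrees; because $H^*(\mathrm{B}G;\bQ)$ is finite-dimensional in each degree and embeds into its real counterpart (and similarly $H^*(\mathrm{B}G^{\delta};\bQ)\hookrightarrow H^*(\mathrm{B}G^{\delta};\bR)$ by universal coefficients), the same conclusion holds rationally, and dualizing gives the desired vanishing on rational homology.

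The argument is essentially a repackaging of classical ingredients, so I do not anticipate a deep obstacle. The only real technical point is that Mather's and Tsuboi's theorems are classically stated for the compactly supported groups of (homeo)morphisms, whereas here I need the analogous equivalence for the identity component group $\mathrm{Homeo}_0(G)$ (and $\Diff^1_0(G)$) when $G$ is a non-compact complex semi-simple Lie group; this variant follows from the refinements of Mather-Thurston developed in \cite{nariman2020thurston}. It is also worth noting that the regularity restriction to $r=0,1$ is intrinsic to this strategy: for $r\ge 2$ the Mather-Thurston equivalence used in the first step is itself the content of \Cref{HT} and is open.
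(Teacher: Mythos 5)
Your proposal follows essentially the same route as the paper's proof: use the homotopy-commutative square comparing $\mathrm{B}G^{\delta}\to \mathrm{B}G$ with $\BdH_0(G)\to \BHomeo_0(G)$ (resp.\ $\mathrm{B}\Diff^{1,\delta}_0(G)\to\mathrm{B}\Diff^1_0(G)$), collapse the right-hand side via Mather's and Tsuboi's theorems, and reduce to the triviality of $H_*(\mathrm{B}G^{\delta};\bQ)\to H_*(\mathrm{B}G;\bQ)$. The only difference is that the paper cites this last vanishing directly as a theorem of Milnor, while you sketch its Chern--Weil proof; otherwise the argument and its scope (including the remark that this strategy is confined to $r\le 1$) match the paper.
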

Geometrically, a nontrivial element in  $H_n(\mathrm{B}G^{\delta};\bQ)$ can be represented by a flat $G$-bundle over a manifold $M^n$ that cannot be extended to a flat $G$ bundle over  a manifold $W^{n+1}$ such that $\partial W=M$. So the above theorem says (up to torsion) any such bundle bounds a flat $\text{Homeo}_0(G)$-bundle. 
However, in higher regularities, we could only show that
\begin{thm}
 The map
\[
H_2(\mathrm{B}G^{\delta};\bQ)\to H_2(\BDiff^{r,\delta}_0(G);\bQ),
\]
is a trivial for any noncompact Lie group, abelian Lie group and also in a special case $G=\text{\textnormal{SU}}_2$.
\end{thm}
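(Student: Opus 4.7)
\emph{Abelian case.} For connected abelian $G$, $H_2(\mathrm{B}G^{\delta};\bQ)$ is spanned by classes of the form $g_1\wedge g_2$, each represented by the flat $G$-bundle with holonomy the commuting pair of translations $L_{g_1},L_{g_2}$. Writing $g_i=\exp(V_i)$ for commuting left-invariant vector fields $V_i$, the two translations fit into a continuous homomorphism $\bR^2\to\Diff_0^r(G)$, so the discrete 2-cycle factors as $\mathrm{B}\bZ^{2,\delta}\to\mathrm{B}\bR^{2,\delta}\to\BDiff_0^{r,\delta}(G)$. The plan is then to apply Mather--Thurston duality in degree~$2$: the question translates to one about the homotopy fiber $\overline{\BDiff_0^r(G)}$, in which the image of our cycle should bound by the same mechanism Tsuboi used to trivialise abelian flow cycles in codimension one (mentioned in the introduction), here adapted to arbitrary codimension in the single low degree we care about.

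\emph{Noncompact case.} For noncompact connected $G$ the Iwasawa decomposition $G=KAN$ provides a contractible factor $AN$ of positive dimension. The plan is to conjugate a 2-cycle of commuting translations $L_{g_1},L_{g_2}$ by a self-diffeomorphism of $G$ isotopic to the identity that ``compresses'' $G$ along the $AN$-directions, so that both $L_{g_i}$ have support inside an embedded ball $B^n\hookrightarrow G$. This realises the ``localizing holonomy to a ball'' strategy advertised in the abstract and places the class in the image of $H_2(\BDiff_c^{r,\delta}(\bR^n);\bQ)$, which vanishes in degree~$2$ for $n\geq 2$ by Mather's perfectness theorem.

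\emph{$G=\mathrm{SU}_2$ case.} By the resolution of the generalised Smale conjecture $\Diff_0(S^3)\simeq\mathrm{SO}_4$, and the inclusion of $\mathrm{SU}_2$ by left translation factors through the double cover $\mathrm{SU}_2\times\mathrm{SU}_2\to\mathrm{SO}_4$ as inclusion into the first factor. Since $\mathrm{SU}_2$ is perfect, $H_1(\mathrm{BSU}_2^{\delta};\bQ)=0$, so by K\"unneth $H_2(\mathrm{B}(\mathrm{SU}_2\times\mathrm{SU}_2)^{\delta};\bQ)\cong H_2(\mathrm{BSU}_2^{\delta};\bQ)^{\oplus 2}$; the quotient by the finite central $\{\pm 1\}$ is a rational isomorphism onto $H_2(\mathrm{BSO}_4^{\delta};\bQ)$. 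It then remains to show $H_2(\mathrm{BSO}_4^{\delta};\bQ)\to H_2(\BDiff_0^{r,\delta}(S^3);\bQ)$ annihilates the image; the plan for this is to combine \Cref{lowreg} applied to $\mathrm{SU}_2$ in the homeomorphism category with a degree-$2$ comparison between $\BDiff_0^{r,\delta}(S^3)$ and $\mathrm{B}\Homeo_0^{\delta}(S^3)$. The main obstacle is the abelian case: producing the continuous lift $\bR^2\to\Diff_0^r(G)$ is tautological, but translating it into an actual null-bordism in the \emph{discrete} topology requires a careful chain-level Mather--Thurston argument, essentially an extension of Tsuboi's triviality result for abelian flow cycles to higher codimension in degree~$2$.
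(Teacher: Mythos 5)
Your proposal defers the decisive step in every case, and in two of the three cases the step it defers to is either false or as hard as the open problems this paper is built around. In the abelian case you correctly reduce to $2$-cycles of commuting translations, but then appeal to ``the same mechanism Tsuboi used'' extended to arbitrary codimension; the paper itself records that Tsuboi's triviality of such abelian/flow cycles is proved only in codimension one and is conjectural in general, and factoring through $\mathrm{B}\bR^{2,\delta}$ buys nothing, since $H_2(\mathrm{B}\bR^{2,\delta};\bQ)\cong\Lambda^2_{\bQ}\bR^2$ is enormous. The missing idea in the paper's argument is concrete and geometric: the suspension foliation of a translation action of $\bZ^2$ on $T^n$ is defined by a Pfaffian system of \emph{closed} $1$-forms, hence (by Moussu's concordance argument for foliations defined by closed forms) is foliated cobordant to the horizontal foliation, which bounds; Mather--Thurston together with the Dupont--Brylinski collapse (\Cref{collapse}) then converts this foliated-cobordism triviality into triviality in $H_2(\BDiff^{r,\delta}_0(T^n);\bQ)$, the Euler class taking care of the extra $\bZ$ summand when $n=1$. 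The noncompact abelian factor $\bR^k$ is handled by a different short argument you do not have: the map factors through $\mathrm{B}\,\mathrm{Aff}(\bR^k)^{\delta}$, and by Quillen's trick $\mathrm{Aff}(\bR^k)\to\mathrm{GL}_k(\bR)$ is a rational homology isomorphism of discrete groups, so translation classes already die there.

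The other two cases contain steps that fail outright. A nontrivial translation $L_g$ of a noncompact $G$ has no fixed points, and conjugation by any diffeomorphism (isotopic to the identity or not) preserves this; since an element of $\Diff_c(\bR^n)\subset\Diff(G)$ fixes the complement of a ball pointwise, no conjugate of $L_g$ can be supported in a ball, so the ``compression'' step cannot be carried out. Even if it could, $H_2(\BdDiff_c(\bR^n);\bQ)=0$ does not follow from perfectness: Mather--Thurston perfectness kills only $H_1$, and this $H_2$-vanishing is precisely the first open case of \Cref{HT} that \Cref{rat} and \Cref{3mfld} are about, so you are assuming something strictly stronger than what is known. You also assume without justification that $H_2(\mathrm{B}G^{\delta};\bQ)$ of a general noncompact $G$ is spanned by commuting pairs. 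For $G=\mathrm{SU}_2$ your reduction through $\mathrm{SO}_4^{\delta}$ is harmless, but the ``degree-$2$ comparison between $\BDiff^{r,\delta}_0(S^3)$ and $\mathrm{B}\Homeo_0^{\delta}(S^3)$'' you need (injectivity of the discrete smooth $H_2$ into the discrete topological $H_2$) is not available and is of the same order of difficulty as the injectivity part of \Cref{HT}; the paper instead invokes Mather's letter to Sah, which gives surjectivity of $H_2(\mathrm{B}(S^1)^{\delta};\bZ)\to H_2(\mathrm{B}\mathrm{SU}_2^{\delta};\bZ)$, reducing every class to maximal-torus holonomy, after which the same closed-forms/foliated-cobordism argument applies.
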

\begin{thm}
For a complex semisimple Lie group $G$, the map
\[
(\alpha_r)_*\colon H_*(\mathrm{B}G^{\delta};\bQ)\to H_*(\BDiff^{r,\delta}_0(G);\bQ),
\]
is trivial for $*\leq \text{dim}(G/K)+1$ where $K$ is a maximal compact Lie subgroup.

\end{thm}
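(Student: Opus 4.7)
The plan is to use the Mather--Thurston theorem to identify the homotopy fiber of $\eta\colon \BDiff^{r,\delta}_0(G)\to \BDiff^r_0(G)$ with a mapping space into Haefliger's classifying space, and then exploit the fact that a complex semisimple Lie group is homotopy equivalent to its maximal compact subgroup to get enough connectivity of this fiber in the desired range.

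First I would apply Mather--Thurston: since $G$ is parallelizable, the theorem gives a weak equivalence of the homotopy fiber with a mapping space $\overline{\BDiff^r_0(G)}\simeq \map(G,\overline{\mathrm{B}\Gamma}^r_n)$, where $n=\dim G$ (with appropriate pointed/support conventions for the noncompact $G$). The Iwasawa decomposition $G=KAN$ gives a smooth diffeomorphism $G\cong K\times \bR^d$ with $d=\dim(G/K)$, and hence a homotopy equivalence $G\simeq K$. Combined with the Thurston--Mather theorem that $\overline{\mathrm{B}\Gamma}^r_n$ is $(n+1)$-connected, and noting that $K$ has dimension $n-d$, a standard obstruction-theory argument yields that $\map(K,\overline{\mathrm{B}\Gamma}^r_n)$ is $(d+1)$-connected. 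Hence the fiber $\overline{\BDiff^r_0(G)}$ is $(d+1)$-connected.

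Second, I would feed this connectivity into the Serre spectral sequence of the fibration $\overline{\BDiff^r_0(G)}\to \BDiff^{r,\delta}_0(G)\to \BDiff^r_0(G)$, which forces the natural map
\[
\eta_*\colon H_*(\BDiff^{r,\delta}_0(G);\bQ)\to H_*(\BDiff^r_0(G);\bQ)
\]
to be an isomorphism in degrees $*\le d+1$. Triviality of $(\alpha_r)_*$ in this range then reduces to showing that the composite $BG^\delta\to \BDiff^{r,\delta}_0(G)\xrightarrow{\eta}\BDiff^r_0(G)$ vanishes on rational homology for $*\le d+1$. This composite is induced by the homomorphism $G\to \Diff^r_0(G)$ of topological groups, so it factors through the map $BG\to \BDiff^r_0(G)$ classifying the principal $G$-bundle $EG\times_G G\to BG$ (for the left-translation action). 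Using $G\simeq K$ and the contractibility of $G/K$, one analyzes the characteristic classes of this bundle in the range $*\le d+1$: they either factor through classes of a principal $K$-bundle (already handled by the methods for compact groups used in \Cref{lowreg}) or vanish because they would have to be pulled back from $G/K$ in a degree exceeding its real dimension.

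The main obstacle is this final step: showing that no genuinely new rational classes arise in degrees $\le d+1$ for the map $BG\to \BDiff^r_0(G)$. The difficulty is that $\BDiff^r_0(G)$ has in general a rich rational cohomology coming from characteristic classes of $G$-manifold bundles, and one must verify that the specific bundle produced by left translations accesses only those classes that are rationally trivial in this range. I expect this to be accomplished by a Chern--Weil/transfer argument leveraging the retraction $G\simeq K$ together with the vanishing of the continuous cohomology $H^*_c(G;\bR)\cong H^*(G/K;\bR)$ above degree $d$, which is precisely what makes the complex semisimple hypothesis usable.
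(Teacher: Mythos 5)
Your first step coincides with the paper's: Mather--Thurston identifies $\overline{\BDiff^{r}_0(G)}$ up to homology with $\mathrm{Map}(G,\overline{\mathrm{B}\Gamma}_n)\simeq \mathrm{Map}(K,\overline{\mathrm{B}\Gamma}_n)$, where $n=\dim G$, and the $(n+1)$-connectivity of $\overline{\mathrm{B}\Gamma}_n$ makes this mapping space acyclic through degree $\dim(G/K)+1$ (strictly this is a homology statement rather than $(d+1)$-connectivity of the fiber, and for $C^r$ one needs $r\neq n+1$; neither affects the argument). The genuine gap is in your last step. After correctly reducing to the triviality, in degrees $\le \dim(G/K)+1$, of the composite $\mathrm{B}G^{\delta}\to \mathrm{B}G\to \BDiff^{r}_0(G)$ on rational homology, you propose to analyze which characteristic classes of the left-translation bundle are detected in $H^*(\BDiff^{r}_0(G);\bQ)$ by a Chern--Weil/transfer argument; this is not carried out, you yourself flag it as the main obstacle, and the identification you lean on is off (van Est gives $H^*_{cont}(G;\bR)\cong H^*(\mathfrak{g},\mathfrak{k})$, the cohomology of the compact dual, not of the contractible space $G/K$). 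The missing ingredient is simply Milnor's theorem \cite{milnor1983homology}, quoted in the proof of \Cref{lowreg}: for a compact or complex semisimple Lie group the map $H_*(\mathrm{B}G^{\delta};\bQ)\to H_*(\mathrm{B}G;\bQ)$ is already zero in positive degrees, so the composite vanishes rationally with no analysis of $\BDiff^{r}_0(G)$ whatsoever. With that citation your argument does close: acyclicity of the fiber makes $\eta_*$ a rational isomorphism through degree $\dim(G/K)+1$, and triviality of the composite then forces $(\alpha_r)_*=0$ in that range.

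For comparison, the paper never passes through $\BDiff^{r}_0(G)$ or injectivity of $\eta_*$: it works with the square involving $\overline{\mathrm{B}G}\to \overline{\BDiff^{r}_0(G)}$ and uses \Cref{collapse} (the Dupont--Brylinski collapse of the Serre spectral sequence of $G\to \overline{\mathrm{B}G}\to \mathrm{B}G^{\delta}$) to see that $H_*(\overline{\mathrm{B}G};\bQ)\to H_*(\mathrm{B}G^{\delta};\bQ)$ is surjective, so triviality of $(\alpha_r)_*$ follows at once from the acyclicity of $\overline{\BDiff^{r}_0(G)}$ in the range. Your (repaired) route trades the collapse theorem for Milnor's triviality theorem; both yield the statement, but as written your final step is a gap, not a proof.
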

However we consider continuous group cohomology, as we shall see in \Cref{flat}, we can drop the hypothesis on the degree.
\subsection*{Acknowledgement} The author was partially supported by  NSF CAREER Grant DMS-2239106, NSF DMS-2113828 and Simons Foundation (855209, SN). The author  thanks Gael Meigniez for the discussion about Haefliger-Thurston's conjecture. The author also thanks Mike Freedman for the discussion about \Cref{flat}. He is also grateful to  Jonathan Bowden,  Sander Kupers and S\o ren Galatius for their comments.

\section{Making manifold bundles flat over $3$-manifolds up to bordism}\label{H_3}
\subsection{Background} \label{back}Let $\Diff^r(M)$ denote the group of $C^r$-diffeomorphisms of a smooth manifold $M$ with the $C^r$-Whitney topology. We decorate it with superscript $\delta$ and subscript $c$ if we consider the same group with discrete topology and the  subgroup of compactly supported diffeomorphisms respectively. If we drop $r$, we mean smooth diffeomorphisms. Recall that one can associate to any topological group $G$, the {\it classifying} space $\mathrm{B}G$ which classifies principal bundles whose group structures are $G$. The identity homomorphism $\Diff_c^r(M)^{\delta}\to \Diff_c^r(M)$ induces the map between classifying spaces
   \[
   \eta: \BDiff_c^r(M)^{\delta}\to \BDiff_c^r(M).
   \]

 Thurston in fact studied $\overline{\BDiff_c^r(M)}$ which is the homotopy fiber of the map $\eta$. This space classifies foliated trivial $M$-bundles. One can give a semi-simplicial model for $\overline{\BDiff_c^r(M)}$ where the set of $k$-simplicies are the set of foliations on the trivial bundle $\Delta^k\times M\to \Delta^k$ that are transverse to the fibers and whose holonomies lie in $\Diff^r_c(M)$. 
    Thurston proved an h-principle type theorem that the geometric object  $\overline{\BDiff_c^r(M)}$ is homology isomorphic to the space of {\it compactly supported} sections of a bundle over $M$ which is more amenable to tools from algebraic topology. To explain this bundle, let $\Gamma_n^r$ denote the topological groupoid whose space of objects is $\bR^n$ and space of morphisms is given by germs of $C^r$-diffeomorphisms between two points in $\bR^n$ with a sheaf topology (see \cite{haefliger1971homotopy}). There is a natural map
   \[
 \nu\colon  \mathrm{B}\Gamma_n^r\to \mathrm{B}\text{GL}_n(\bR),
   \]
    induced by the derivatives of germs. Let $\overline{\mathrm{B}\Gamma_n^r}$ denote the homotopy fiber of the map $\nu$. Let $\tau_M\colon M\to  \mathrm{B}\text{GL}_n(\bR)$ be the map that classifies the tangent bundle and $\tau_M^*(\nu)$ be the bundle over $M$ induced by the pullback of the map $\nu$ via $\tau_M$. Fix a section $s_0$ of this bundle. For any other section $s$, we can define the support of $s$ to be the set of points $x\in M$ where $s(x)\neq s_0(x)$. Let $\text{Sect}_c(\tau_M^*(\nu))$ be the subspace of compactly supported sections of the bundle $\tau_M^*(\nu)\to M$. Alternatively, it is homotopy equivalent to the space of lifts $\alpha$ of the map $\tau_M$ in the diagmram
    \[
 \begin{tikzpicture}[node distance=1.8cm, auto]
  \node (A) {$ M$};
  \node (B) [right of=A, node distance=3cm] {$\mathrm{B}\text{GL}_n(\bR),$};
  \node (C) [above of= B ] {$\mathrm{B}\Gamma_n^r$};  
   \draw [->] (A) to node {$\tau_M$}(B);
  \draw [->] (C) to node {$\nu$}(B);
  \draw [->] (A) to node {$\alpha$}(C);
\end{tikzpicture}
\]
    Mather-Thurston's theorem \cite{mather2011homology} says that there is a map $
 \overline{\BDiff_c^r(M)}\to \text{\textnormal{Sect}}_c(\tau_M^*(\nu)),$ which induces a homology isomorphism. 

Note that if $M$ is parallelizable, the map $\tau_M$ is null-homotopic, so the space of lifts $\alpha$ is homotopy equivalent to the space of maps $\text{Map}(M, \overline{\mathrm{B}\Gamma}_n)$.

We shall use the following standard fact about homotopy groups of mapping spaces. Suppose $M$ is an $n$-dimensional connected manifold and $X$ is an $(n+k)$-connected CW complex. Note that $\pi_i(\text{Map}(M, X))=[S^i, \text{Map}(M, X)]$ where the bracket denotes pointed homotopy classes. Now using adjunction and the cellular approximation imply the following fact.
Then $\pi_i(\text{Map}(M, X))=0$ for $i\leq k$ and $\pi_{k+1}(\text{Map}(M, X))=\pi_{n+k+1}(X)$. \subsection{Proof of \Cref{H3}}
Let $\Diff_0(M)$ be the identity component of $\Diff(M)$. Sometimes it is easier to work with the identity component first. Using the short exact sequence $\Diff_0(M)\to \Diff(M)\to \pi_0(\Diff(M))$ and comparison of Hochschild-Serre spectral sequences, it is easy to see that Haefliger-Thurston's conjecture can be deduced from the same statement for the map $\eta\colon \BdDiff_0(M)\to \BDiff_0(M)$.

Now suppose that we have an $M$-bundle over a $3$-manifold whose bundle group structure is $\Diff_0(M)$. To make this bundle flat up to bordism (see also \cite{freedman2020controlled}) we want to see whether the map
\[
\eta_*\colon H_3(\BdDiff_0(M);\bZ)\to H_3(\BDiff_0(M);\bZ),
\]
is surjective. Let us first consider an interesting nontrivial case where  $M\cong S^1\times S^2$. Using the perfectness of $\dDiff_0(M)$, We know that the $\overline{\BDiff_0(M)}$, homotopy fiber of $\eta$ has vanishing first homology for any manifold $M$. And by Mather-Thurston's theorem (\cite{mather2011homology}), since $3$-manifolds $M$ are parallelizable, $\overline{\BDiff_0(M)}$ is homology isomorphic to the space of maps $\text{Map}(M, \overline{\mathrm{B}\Gamma}_3)$. But by Thurston's theorem (\cite{thurston1974foliations}) $\overline{\mathrm{B}\Gamma}_n$ is at least $(n+1)$-connected. Therefore, the space $\text{Map}(M, \overline{\mathrm{B}\Gamma}_3)$ is simply connected, so we have 
\begin{equation}\label{MT}
H_2(\text{Map}(M, \overline{\mathrm{B}\Gamma}_3);\bZ)\cong \pi_2(\text{Map}(M, \overline{\mathrm{B}\Gamma}_3))\cong \pi_5(\overline{\mathrm{B}\Gamma}_3),
\end{equation}
where the first isomorphism is by the Hurewicz theorem and the second is given by the above standard fact about mapping spaces. On the other hand, by the same argument $H_2(\overline{\BDiff_c(\bR^3)};\bZ)$ is also isomorphic to $\pi_5(\overline{\mathrm{B}\Gamma}_3)$. Therefore, for all embeddings of an open disk $\bR^3\hookrightarrow M$, the map
\[
\overline{\BDiff_c(\bR^3)}\to \overline{\BDiff_0(M)},
\]
induces an isomorphism on $H_2$. Note that $\BDiff_0(M)$ is simply connected, so to prove that $\eta_*$ is surjective on $H_3$, it is enough to prove the following $d_3$ differential in the Serre spectral sequence for $\overline{\BDiff_0(M)}\to \BdDiff_0(M)\to \BDiff_0(M)$ is trivial,
\[
d_3\colon H_3(\BDiff_0(M);\bZ)\to H_2(\overline{\BDiff_0(M)};\bZ).
\]
For many geometric $3$-manifolds for which we know the homotopy type of $\Diff_0(M)$ by the generalized Smale's conjecture, $H_3(\BDiff_0(M);\bZ)=0$. Hence, for those $3$-manifolds, the same statement follows directly from the generalized Smale's conjecture. But for the case $M\cong S^1\times S^2$ where $H_3(\BDiff_0(M);\bZ)=Z$, we show that the above differential vanishes by comparison of spectral sequences. 

First let $N\cong [0,1]\times S^2$ be a submanifold of $M$ such that $N\hookrightarrow M$ induces an isomorphism on $\pi_2$. Again by Hatcher's theorem (see listed of equivalent statements in \cite[Appendix]{hatcher1983proof}) the group $\Diff_0(N,\text{rel }\partial)$ is homotopy equivalent to the base point component of the loop space $\Omega \text{SO}_3$. Therefore, the map 
\[
\BDiff_0(N,\text{rel }\partial)\to \BDiff_0(M),
\]
induces an isomorphism on $H_3$. Since we have the commutative diagram 
\[
 \begin{tikzpicture}[node distance=1.5cm, auto]
  \node (A) {$ H_3(\BdDiff_0(N,\text{rel }\partial);\bZ)$};
  \node (B) [right of=A, node distance=4cm] {$ H_3(\BDiff_0(N,\text{rel }\partial);\bZ)$};
  \node (C) [below of=A]{$ H_3(\BdDiff_0(M);\bZ)$};
  \node (D) [right of=C, node distance=4cm]{$ H_3(\BDiff_0(M);\bZ),$};
  \draw [->] (A) to node {$$}(B);
    \draw [->] (A) to node {$$}(C);
  \draw [->] (C) to node {$$}(D);
  \draw [->] (B) to node {$\cong$}(D);
\end{tikzpicture}
\]
it is enough to show that the horizontal map is surjective. But now by capping off one of the sphere boundary components of $N$, we obtain an embedding $N\hookrightarrow D^3$ which induces a commutative diagram up to homotopy
\[
 \begin{tikzpicture}[node distance=1.5cm, auto]
  \node (A) {$ \BdDiff_0(N,\text{rel }\partial)$};
  \node (B) [right of=A, node distance=4cm] {$ \BDiff_0(N,\text{rel }\partial)$};
  \node (C) [below of=A]{$ \BdDiff_0(D^3,\text{rel }\partial)$};
  \node (D) [right of=C, node distance=4cm]{$  \BDiff_0(D^3,\text{rel }\partial).$};
  \draw [->] (A) to node {$$}(B);
    \draw [->] (A) to node {$$}(C);
  \draw [->] (C) to node {$$}(D);
  \draw [->] (B) to node {$$}(D);
\end{tikzpicture}
\]
The comparison of the corresponding spectral sequences for $N$ and $D^3$ implies that $d_3$ factors through $H_3(\BDiff_0(D^3,\text{rel }\partial);\bZ)$
\[
 \begin{tikzpicture}[node distance=1.5cm, auto]
  \node (A) {$ H_3(\BDiff_0(N,\text{rel }\partial);\bZ)$};
  \node (B) [right of=A, node distance=4.5cm] {$H_2(\overline{\BDiff_0(N,\text{rel }\partial)};\bZ)$};
  \node (C) [below of=A]{$ H_3(\BDiff_0(D^3,\text{rel }\partial);\bZ)$};
  \node (D) [right of=C, node distance=4.5cm]{$  H_2(\overline{\BDiff_0(D^3,\text{rel }\partial)};\bZ).$};
  \draw [->] (A) to node {$d_3$}(B);
    \draw [->] (A) to node {$$}(C);
  \draw [->] (C) to node {$d_3$}(D);
  \draw [->] (B) to node {$\cong$}(D);
\end{tikzpicture}
\]
Note that the fact that the right vertical map is an isomorphism follows from the isomorphism \ref{MT}. On the other hand, by Hatcher's theorem (\cite{hatcher1983proof}) $\Diff_0(D^3,\text{rel }\partial)$ is contractible. Therefore, we have $H_3(\BDiff_0(D^3,\text{rel }\partial);\bZ)=0$ which implies that $d_3$ for $N$ factors through a zero group. Hence, it is a trivial map. This was a special case, that we could argue integrally. Now motivated by this example, let's prove \Cref{H3}.
\begin{proof}[Proof of \Cref{H3}]
Recall $M$ is a manifold whose $\text{dim}(M)$ is not $1$ modulo $4$. As we saw in the above example, it is enough to prove the following $d_3$ differential in the Serre spectral sequence for $\overline{\BDiff_0(M)}\to \BdDiff_0(M)\to \BDiff_0(M)$ is trivial,
\[
d_3\colon H_3(\BDiff_0(M);\bZ)\to H_2(\overline{\BDiff_0(M)};\bZ).
\]
Now since $\BDiff_0(M)$ is simply connected space, the Hurwicz map $\pi_3(\BDiff_0(M))\to H_3(\BDiff_0(M);\bZ)$ is surjective. On the other hand, the long exact sequence of homotopy groups for the fibration $\overline{\BDiff_0(M)}\to \BdDiff_0(M)\to \BDiff_0(M)$ implies that $\pi_3(\BDiff_0(M))\cong \pi_2(\overline{\BDiff_0(M)})$. Hence, to show that the differential $d_3$ is trivial, it is enough to show that the Hurewicz map which is right vertical map in
\[
 \begin{tikzpicture}[node distance=1.5cm, auto]
  \node (A) {$ H_3(\BDiff_0(M);\bZ)$};
  \node (B) [right of=A, node distance=4.5cm] {$H_2(\overline{\BDiff_0(M)};\bZ)$};
  \node (C) [below of=A]{$ \pi_3(\BDiff_0(M))$};
  \node (D) [right of=C, node distance=4.5cm]{$  \pi_2(\overline{\BDiff_0(M)}),$};
  \draw [->] (A) to node {$d_3$}(B);
    \draw [<<-] (A) to node {$$}(C);
  \draw [->] (C) to node {$\cong$}(D);
  \draw [<-] (B) to node {$h$}(D);
\end{tikzpicture}
\]
is trivial.  Now consider the following commutative diagram
\[
 \begin{tikzpicture}[node distance=1.5cm, auto]
  \node (A) {$ \pi_2(\overline{\BDiff_0(M)})$};
  \node (B) [right of=A, node distance=4.5cm] {$H_2(\overline{\BDiff_0(M)};\bZ)$};
    \node (E) [right of=B, node distance=4.5cm] {$H_{n+2}(\overline{\mathrm{B}\Gamma}_n;\bZ)$};
  \node (C) [below of=A]{$ \pi_2(\BdDiff_0(M))=0$};
  \node (D) [right of=C, node distance=4.5cm]{$ H_2(\BdDiff_0(M);\bZ)$};
      \node (F) [right of=D, node distance=4.5cm] {$H_{n+2}(\mathrm{B}\Gamma^+_n;\bZ).$};
  \draw [->] (A) to node {$h$}(B);
    \draw [->] (A) to node {$$}(C);
        \draw [->] (B) to node {$\cong$}(E);
    \draw [->] (D) to node {$p$}(F);
        \draw [->] (E) to node {$i$}(F);
  \draw [->] (C) to node {$$}(D);
  \draw [->] (B) to node {$$}(D);
\end{tikzpicture}
\]
The top right horizontal map is an isomorphism by Mather-Thurston's theorem (\cite[Bottom of page 306]{thurston1974foliations}). And the map $p$ is induced by considering elements in $ H_2(\BdDiff_0(M);\bZ)$ as flat $M$-bundles over a surface so the total space is $(n+2)$-dimensional with a codimension $n$-foliation which gives an element in $H_{n+2}(\mathrm{B}\Gamma^+_n;\bZ).$ Hence, to prove the theorem, it is enough to show that the map $i$ is rationally injective if $n\nequiv 1\bmod 4$. But to show that $i$ is rationally injective, we shall consider the Serre spectral sequence for the fibration 
\[
\overline{\mathrm{B}\Gamma}_n\to \mathrm{B}\Gamma^+_n\to \mathrm{B}\text{GL}^+_n(\bR). 
\]
Since by Thurston's theorem (\cite{thurston1974foliations}) we know that $\overline{\mathrm{B}\Gamma}_n$ is $n+1$ connected, it is enough to show that the differential $H_{n+3}(\mathrm{B}\text{GL}^+_n(\bR);\bQ)\to H_{n+2}(\overline{\mathrm{B}\Gamma}_n;\bQ)$ is trivial. But for $n\nequiv 1\bmod 4$, we know that $H_{n+3}(\mathrm{B}\text{GL}^+_n(\bR);\bQ)$ is trivial. 
\end{proof}
\begin{rem}
To drop the hypothesis $n\nequiv 1\bmod 4$, we need to show that the transgression map $H_{n+3}(\mathrm{B}\text{GL}^+_n(\bR);\bQ)\to H_{n+2}(\overline{\mathrm{B}\Gamma}_n;\bQ)$ is trivial. To determine this map, one could look at the fibration $\text{GL}^+_n(\bR)\xrightarrow{\iota} \overline{\mathrm{B}\Gamma}_n\to \mathrm{B}\Gamma^+_n$ and E.Vogt in (\cite[Problem F.2.1]{MR1271828}) posed the question that whether $\iota$ is nullhomotopic. 
\end{rem}
\subsection{Further discussion for different transverse structures} As we mentioned, the main evidence behind this conjecture \ref{HT} was Gelfand-Fuks computations of continuous Lie algebra cohomology of formal vector fields and also the fact there are no secondary characteristic classes known in degrees lower $2n+1$ for a codimension $n$ foliation. The same line of thought can be applied to foliations with other transverse structures. For example, for the case of having transverse contact structure for a foliation with odd codimenison $n=2k+1$, Feigin (\cite{MR687396}) computed the continuous Lie algebra cohomology of formal contact vector fields and observed that it vanishes at least up to degree $2n$. Similarly, one can formulate the contact version of \Cref{HT}.
\begin{conj*}
Let $(M,\alpha)$ be a contact manifold where $M$ is a manifold of dimension $n=2k+1$ and $\alpha$ is a smooth $1$-form such that $\alpha \wedge (d\alpha)^n$ is
a volume form. The group of orientation preserving $C^{\infty}$-contactomorphisms consists of $C^{\infty}$-diffeomorphisms such that $f^*(\alpha)=\lambda_f\alpha$ where $\lambda_f$ is a non-vanishing positive smooth function on $M$ depending on $f$. Since we are working with orientation preserving automorphisms, we assume that $\lambda_f$ is a positive function. Let $\text{\textnormal{Cont}}_c(M,\alpha)$ denote the group of compactly supported contactomorphisms with induced topology from $C^{\infty}$-diffeomorphisms. Then the natural map
\[
\mathrm{B}\text{\textnormal{Cont}}_c(M,\alpha)^{\delta}\to \mathrm{B}\text{\textnormal{Cont}}_c(M,\alpha),
\]
induces a homology isomorphism up to degree $n$ and a surjection on homology in degree $n+1$. 
\end{conj*}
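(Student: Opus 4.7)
The plan is to adapt to the contact setting the framework that relates the smooth Haefliger--Thurston conjecture to the connectivity of $\overline{\mathrm{B}\Gamma}_n$. First I would define the étale groupoid $\Gamma^{ct}_n$ of germs of local orientation preserving contactomorphisms of the standard contact $\bR^{n}$, together with its classifying space $\mathrm{B}\Gamma^{ct}_n$ and the homotopy fiber $\overline{\mathrm{B}\Gamma}^{ct}_n$ of the natural map $\nu^{ct}$ to the classifying space of the linear contact group (the stabilizer of the standard contact hyperplane at the origin together with its conformal symplectic form on $\bR^n$).

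The second step is to prove a Mather--Thurston theorem in the contact category identifying, up to homology, the homotopy fiber of $\eta$ with the compactly supported section space of the $\overline{\mathrm{B}\Gamma}^{ct}_n$-bundle over $M$ determined by the contact normal data. The argument should combine the semi-simplicial transformation-group model used in \cite{nariman2020thurston} with an h-principle for open contact relations. Granting this identification, the conjecture reduces, exactly as in the smooth case, to a connectivity statement: one must show that $\overline{\mathrm{B}\Gamma}^{ct}_n$ is $2n$-connected (which then gives vanishing of homology of the relevant section space in degrees $\leq n$ by obstruction theory).

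To approach this connectivity, I would first obtain an initial range by a Gromov--Phillips style argument adapted to contact Haefliger structures, which should yield at least $n$-connectedness. Rybicki's simplicity theorem for $\mathrm{Cont}_0(M,\alpha)$, combined with a contact analogue of the Mather--Thurston perfectness-to-connectivity upgrade, should push this to $(n+1)$-connectedness. Rationally, one could then feed Feigin's vanishing \cite{MR687396} of the continuous cohomology of formal contact vector fields up to degree $2n$ into Haefliger's differentiable cohomology machinery to try to extract the full rational range, in direct parallel with the way Gelfand--Fuks feeds into the smooth version.

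The main obstacle will be the same obstacle that prevents a proof of the original Haefliger--Thurston conjecture: closing the gap between these easy connectivity bounds and the full conjectural $2n$-connectedness, especially with integral coefficients. Even the analogue of Mather's step from simplicity of $\mathrm{Cont}_0^\delta(M,\alpha)$ to $(n+1)$-connectedness of $\overline{\mathrm{B}\Gamma}^{ct}_n$ will be delicate, because the contact fragmentation lemma must be executed with compactly supported contact Hamiltonians, and the support control needed for the contact isotopy extension is considerably more subtle than in the diffeomorphism case. Beyond that, pushing toward $2n$-connectedness integrally would require a genuinely new idea, since it would essentially subsume a range that remains intractable in the smooth case.
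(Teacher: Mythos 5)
The statement you are trying to prove is not a theorem of the paper at all: it appears in Section~2 as an unproved conjecture (the contact analogue of \Cref{HT}), and the only ``evidence'' the paper offers for it is Feigin's computation that the continuous Lie algebra cohomology of formal contact vector fields vanishes up to degree $2n$, exactly parallel to the Gelfand--Fuks evidence for the smooth conjecture. So there is no proof in the paper to compare your outline against, and your proposal does not supply one either. Your reduction scheme is the expected one --- define a groupoid $\Gamma^{ct}_n$ of germs of local contactomorphisms, prove a contact Mather--Thurston theorem identifying the homotopy fiber of $\eta$ with a section space built from $\overline{\mathrm{B}\Gamma}^{ct}_n$, and then deduce the statement from $2n$-connectedness (or at least the appropriate integral connectivity) of $\overline{\mathrm{B}\Gamma}^{ct}_n$ --- but the last step is precisely the content of the conjecture, and you explicitly leave it open. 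Feigin's vanishing, like Haefliger's differentiable cohomology computation in the smooth case, only constrains continuous/Gelfand--Fuks-type cohomology; it does not yield actual connectivity of the Haefliger classifying space, integrally or even rationally, so feeding it into ``Haefliger's machinery'' cannot close the gap.

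Two further cautions about the intermediate steps you treat as routine. First, a contact Mather--Thurston theorem is itself a nontrivial result that you would have to establish; the semi-simplicial/fragmentation arguments of the smooth and PL cases rely on support control and isotopy extension properties that, as you note yourself, are more delicate for compactly supported contact Hamiltonians, so this step cannot simply be cited. Second, the passage from perfectness (or simplicity) of $\mathrm{Cont}_0(M,\alpha)$ to $(n+1)$-connectedness of $\overline{\mathrm{B}\Gamma}^{ct}_n$ is the contact analogue of a hard theorem of Mather and Thurston, not a formal consequence; even granting it, you would only recover homology isomorphism in degrees $\leq 1$ and surjectivity in degree $2$, far short of the range $\leq n$ demanded by the statement. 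In short: the approach is the natural one and mirrors how the paper frames the smooth conjecture, but the essential connectivity input is missing, so the statement remains a conjecture.
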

Another interesting transverse structure for foliations is to have volume preserving holonomies. To formulate a similar question in terms of volume preserving diffeomorphisms, let $M$ be an $n$-dimensional manifold with a fixed volume form $\omega$ and let $\Diff^{\omega}(M)$ denote the group of volume preserving diffeomorphism equipped with the $C^{\infty}$-topology. It is interesting to find the largest homological degree so that up to that degree the map 
\[
\eta\colon\mathrm{B}\Diff^{\omega,\delta}(M)\to \mathrm{B}\Diff^{\omega}(M),
\]
induces a homology isomorphism. Let $\overline{\mathrm{B}\Diff^{\omega}(M)}$ denote the homotopy fiber of $\eta$. McDuff (\cite[\S 2]{MR678355}) showed that when the volume of $\omega$ is infinity the space $\overline{\Diff_c^{\omega}(R^n)}$ has a nontrivial $(n-1)$-th homology.  And in fact, Hurder (\cite{hurder1983global}) proved that the classifying space of Haefliger structures preserving volume form with a trivial normal bundle $\overline{\mathrm{B}\Gamma}_n^{\text{vol}}$ for $n>2$ is not $(n+3)$-connected. Therefore, the best we can expect in the volume preserving case for dimension bigger than $2$ would be
\begin{quest*}
Let $(M,\omega)$ be a pair of an $n$-dimensional manifold $M$ and a volume form $\omega$. Then the map
\[
\eta\colon\mathrm{B}\Diff^{\omega,\delta}(M)\to \mathrm{B}\Diff^{\omega}(M),
\]
induces a homology isomorphism on $H_2(-;\bZ)$ if $\text{dim}(M)>2$. 
\end{quest*}

\section{Flat $G$-bundles vs flat $\Diff_0(G)$-bundles}\label{flat}
Let $G$ be a finite dimensional connected Lie group. A flat $G$-bundle $p\colon E\to M$ over an oriented manifold $M$ gives a cycle in the group homology of $G^{\delta}$. We can consider such flat bundle as a flat $\Diff_0^r(G)$-bundle by extending the holonomy group via the map $G\to \Diff_0^r(G)$, and ask whether it is a nontrivial cycle in group homology of $\Diff_0^r(G)$. In other words, we have the induced map

\[
\alpha_r\colon \mathrm{B}G^{\delta}\to \BDiff^{r,\delta}_0(G),
\]
and we want to study whether $\alpha_r$ is homologically nontrivial. \Cref{HT} as we explained in the introduction suggests that this map might be trivial on integral homology in degrees less than $\text{dim}(G)+1$. 

\begin{proof}[Proof of \Cref{lowreg}]
Recall that Milnor proved (\cite{milnor1983homology}) that for a compact or complex semi-simple Lie group $H_*(\mathrm{B}G^{\delta};\bQ)\to H_*(\mathrm{B}G;\bQ)$ is a trivial map. Given the commutative diagram
\[
 \begin{tikzpicture}[node distance=1.5cm, auto]
  \node (A) {$ H_*(\mathrm{B}G^{\delta};\bQ)$};
  \node (B) [right of=A, node distance=3cm] {$H_*(\BDiff^{r,\delta}_0(G);\bQ)$};
  \node (C) [below of=A]{$H_*(\mathrm{B}G;\bQ)$};
  \node (D) [right of=C, node distance=3cm]{$H_*(\BDiff^r_0(G);\bQ),$};
  \draw [->] (A) to node {$$}(B);
    \draw [->] (A) to node {$$}(C);
  \draw [->] (C) to node {$$}(D);
  \draw [->] (B) to node {$$}(D);
\end{tikzpicture}
\]
and \Cref{HT}, in homological dimension less than $\text{dim}(G)+1$, we expect that the top map is trivial. Note that since \Cref{HT} is known for $r=0,1$ in all homological degree, we already conclude the proof for low regularities.
\end{proof}
So we assume that the regularity $r>1$. Consider the following homotopy commutative diagram 
\[
\begin{tikzpicture}[node distance=1.5cm, auto]
  \node (A) {$\overline{ \mathrm{B}G}$};
  \node (B) [right of=A, , node distance=3cm] {$\overline{\BDiff^{r}_0(G)}$};
  \node (C) [below of=A]{$\mathrm{B}G^{\delta}$};
  \node (D) [right of=C, node distance=3cm]{$\BDiff^{r,\delta}_0(G).$};
  \draw [->] (A) to node {$\overline{\alpha_r}$}(B);
    \draw [->] (A) to node {$$}(C);
  \draw [->] (C) to node {$\alpha_r$}(D);
  \draw [->] (B) to node {$$}(D);
\end{tikzpicture}
\]
As we learned from Mather-Thurston's theory, it is sometimes easier to work with $\overline{\BDiff^{r}_0(G)}$ first. So we work with $\overline{\alpha_r}$ instead and in fact for compact Lie group or complex semisimple groups, studying $\overline{\alpha_r}$  would be enough for our purpose because of the following lemma. 
\begin{lem}\label{collapse}
For a compact Lie group or a complex semisimple group $G$, if the map $\overline{\alpha_r}$ induces a trivial map on $H_k(-;\bQ)$ so does $\alpha_r$.
\end{lem}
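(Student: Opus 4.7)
The plan is to compare the Serre spectral sequences of the two homotopy fiber sequences
\[
\overline{\mathrm{B}G} \to \mathrm{B}G^\delta \to \mathrm{B}G, \qquad \overline{\BDiff^r_0(G)} \to \BDiff^{r,\delta}_0(G) \to \BDiff^r_0(G),
\]
related by the map $\alpha_r$, and to invoke Milnor's vanishing theorem. Since $G$ and $\Diff_0^r(G)$ are connected, both bases $\mathrm{B}G$ and $\BDiff^r_0(G)$ are simply connected, so the $E^2$-pages split as untwisted tensor products $E^2_{p,q} = H_p(\text{base};\bQ)\otimes H_q(\text{fiber};\bQ)$, and the induced map of $E^2$-pages is the tensor product of the induced maps on base and on fiber. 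Reading the hypothesis as vanishing of $(\overline{\alpha_r})_*$ on $H_q(-;\bQ)$ in the relevant range $1\le q \le k$, the map of $E^2$-pages, and hence of $E^\infty$-pages, vanishes on every bidegree $(p,q)$ with $q\geq 1$.

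Milnor's theorem supplies the remaining ingredient. For $G$ compact or complex semisimple, the map $H_*(\mathrm{B}G^\delta;\bQ)\to H_*(\mathrm{B}G;\bQ)$ vanishes in positive degrees, i.e.\ the edge map of the source spectral sequence is zero, so $E^\infty_{p,0}=0$ for all $p>0$. Combining the two observations, the induced map on the associated graded of the two Serre filtrations of $H_k$ is identically zero.

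The last, and most delicate, step is to promote vanishing on the associated graded to vanishing of $(\alpha_r)_*$ itself, since a filtered map of $\bQ$-vector spaces with trivial associated graded need not be zero. The approach is to reduce the Serre filtration $F'_\bullet$ of the target one level at a time: Milnor combined with the commutativity of the square of forgetful maps $\{\mathrm{B}G^\delta\to \mathrm{B}G\}$ and $\{\BDiff^{r,\delta}_0(G)\to\BDiff^r_0(G)\}$ forces $(\alpha_r)_*(H_k)$ to land inside $F'_{k-1}$, and one then iterates this reduction using the vanishing of $(\overline{\alpha_r})_*$ in each intermediate degree, eventually obtaining $(\alpha_r)_*(H_k)\subseteq F'_{-1}=0$. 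The main obstacle is precisely this extension problem in the Serre spectral sequence; its resolution critically relies on having Milnor's vanishing in every positive degree (not just degree $k$) together with the hypothesis holding for every intermediate degree $q\leq k$.
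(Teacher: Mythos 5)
Your approach is genuinely different from the paper's, and unfortunately it does not close. The paper never touches the fibration $\overline{\mathrm{B}G}\to \mathrm{B}G^\delta\to \mathrm{B}G$ at all. Instead it invokes a theorem of Dupont and Brylinski that the Serre spectral sequence of the \emph{other} fibration, $G\to\overline{\mathrm{B}G}\to\mathrm{B}G^\delta$, collapses rationally when $G$ is compact or complex semisimple. Collapse there makes the edge map an isomorphism, so the projection $H_*(\overline{\mathrm{B}G};\bQ)\to H_*(\mathrm{B}G^\delta;\bQ)$ is \emph{surjective}. With that in hand the lemma is a one-line diagram chase: any class in $H_k(\mathrm{B}G^\delta;\bQ)$ lifts to $H_k(\overline{\mathrm{B}G};\bQ)$, where $\overline{\alpha_r}$ kills it. There is no extension problem to solve.

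Your argument, by contrast, supplies only $E^\infty_{p,0}=0$ for $p>0$ on the source side (Milnor's vanishing), i.e.\ $F_{k-1}=F_k=H_k$; together with the vanishing of $(\overline{\alpha_r})_*$ it makes the map on the full associated graded zero. But a filtered map of $\bQ$-vector spaces with zero associated graded need not be zero, as you rightly worry, and the iteration you propose to repair this breaks after the very first step. Concretely: from $x\in F_{k-1}$ you deduce $(\alpha_r)_*(x)\in F'_{k-2}$ because the graded piece of $x$ in $F_{k-1}/F_{k-2}$ maps by zero. To push $(\alpha_r)_*(x)$ further down into $F'_{k-3}$ you would need to control the image of $(\alpha_r)_*(x)$ in $F'_{k-2}/F'_{k-3}$ via the graded map, but that requires knowing $x\in F_{k-2}$, which Milnor does not give. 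Nothing in your hypotheses forces $F_{k-2}=H_k$, so the descent stalls. What would make the descent go through to $F'_{-1}=0$ is precisely the statement $F_0=H_k$ on the source, i.e.\ surjectivity of $H_k(\overline{\mathrm{B}G};\bQ)\to H_k(\mathrm{B}G^\delta;\bQ)$ --- which is the Dupont--Brylinski collapse the paper uses, and which is genuinely stronger than Milnor's $E^\infty_{k,0}=0$. (The lemma is even named \texttt{collapse} in the source for this reason.) A secondary issue: you strengthen the hypothesis to vanishing of $(\overline{\alpha_r})_*$ in all degrees $1\le q\le k$, whereas the lemma as stated only assumes vanishing in degree $k$; this happens to be harmless in the paper's application, but your argument would not prove the lemma as stated even if the iteration worked.
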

\begin{proof}
Dupont (\cite[Theorem 3.1]{MR1286932}) and Brylinski (\cite{MR1247279}) showed that the Serre spectral sequence for the fibration 
\[
G\to \overline{ \mathrm{B}G}\to \mathrm{B}G^{\delta},
\]
collapses rationally if $G$ is a compact or complex semisimple Lie group. Therefore, in particular in these cases, the map
\[
H_*( \overline{ \mathrm{B}G};\bQ)\twoheadrightarrow H_*( \mathrm{B}G^{\delta};\bQ),
\]
is surjective. So if $\overline{\alpha_r}$ induces a trivial map on rational homology in some degree, so will be $\alpha_r$. 
\end{proof}
\begin{thm}
Let $G$ be a real Lie group and $K$ be a maximal compact subgroup. Then, the induced map
\[
\overline{\alpha_r}_*\colon H_*(\overline{ \mathrm{B}G};\bZ)\to H_*(\overline{\BDiff^{r}_0(G)};\bZ),
\]
is a trivial map for $*\leq \text{dim}(G/K)+1$.
\end{thm}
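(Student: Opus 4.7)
The strategy is to show that the target $\overline{\BDiff^r_0(G)}$ itself has vanishing reduced integral homology in the range $*\le\dim(G/K)+1$, which automatically forces $\overline{\alpha_r}_*$ to be trivial there without any analysis of the map $\overline{\alpha_r}$. The three ingredients are Mather-Thurston's theorem, the Cartan-Iwasawa-Mostow decomposition $G\cong K\times\bR^{\dim(G/K)}$, and Thurston-Mather's connectivity estimate for $\overline{\mathrm{B}\Gamma}^r_n$.

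First, as $G$ is parallelizable (being a Lie group), Mather-Thurston's theorem, as used in the proof of \Cref{H3} above, identifies $\overline{\BDiff^r_0(G)}$ up to integral homology with a space of maps from $G$ to $\overline{\mathrm{B}\Gamma}^r_n$, where $n=\dim(G)$. Second, the Cartan-Iwasawa-Mostow theorem gives a diffeomorphism $G\cong K\times\bR^{\dim(G/K)}$, so $G$ deformation retracts onto $K$ and the relevant mapping space is homotopy equivalent to $\text{Map}(K,\overline{\mathrm{B}\Gamma}^r_n)$. Here $K$ is a closed smooth manifold, hence a CW complex of dimension $n-\dim(G/K)$.

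Third, by Thurston's theorem (extended by Mather for $r\neq n+1$), $\overline{\mathrm{B}\Gamma}^r_n$ is $(n+1)$-connected. Elementary obstruction theory then shows that the pointed mapping space $\text{Map}_*(K,\overline{\mathrm{B}\Gamma}^r_n)$ is $\bigl((n+1)-(n-\dim(G/K))\bigr)=(\dim(G/K)+1)$-connected, since smashing with $K$ raises dimension by at most $n-\dim(G/K)$. Comparing with the evaluation fibration $\text{Map}_*(K,Y)\to\text{Map}(K,Y)\to Y$ and using that $Y=\overline{\mathrm{B}\Gamma}^r_n$ is $(n+1)$-connected shows that $\text{Map}(K,\overline{\mathrm{B}\Gamma}^r_n)$ is connected and $(\dim(G/K)+1)$-connected at the constant map. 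By Hurewicz, its reduced integral homology vanishes in degrees $\le\dim(G/K)+1$, hence so does that of $\overline{\BDiff^r_0(G)}$ via Mather-Thurston, and the conclusion about $\overline{\alpha_r}_*$ follows.

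The main technical obstacle is the noncompact case. Mather-Thurston is most cleanly formulated for closed manifolds, or for compactly supported diffeomorphisms of open manifolds, whereas $\Diff^r_0(G)$ contains noncompactly supported isotopies as soon as $G$ is noncompact (for instance, left translations). For compact $G$, the argument above applies without modification. For noncompact $G$, one would need either a version of Mather-Thurston valid for the full group $\Diff^r_0$ of a paracompact manifold, identifying its fiber with an appropriate non-compactly-supported section space, or a reduction to the compact case exploiting the retraction of $G$ onto $K$ and the resulting $K$-equivariant geometry. I expect this can be arranged, but it is the step requiring the most care in carrying out the plan.
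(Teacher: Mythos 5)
Your proof is essentially the same as the paper's: use Mather--Thurston and parallelizability of $G$ to replace $\overline{\BDiff^r_0(G)}$ by $\text{Map}(G,\overline{\mathrm{B}\Gamma}^r_{\dim G})$ up to homology, use $G\simeq K$ (Cartan--Iwasawa--Mostow) to reduce to $\text{Map}(K,\overline{\mathrm{B}\Gamma}^r_{\dim G})$, and then use the $(\dim G+1)$-connectivity of $\overline{\mathrm{B}\Gamma}^r_{\dim G}$ together with $\dim K=\dim G-\dim(G/K)$ to conclude $(\dim(G/K)+1)$-connectivity of this mapping space. Your flagged worry about invoking Mather--Thurston for the full group $\Diff^r_0(G)$ of a noncompact $G$ is a fair point to raise, but the paper applies the theorem in exactly the same way; the formulation of Mather--Thurston being used (as in \cite{mather2011homology}) does cover open parallelizable manifolds with the full, not compactly supported, section space $\text{Map}(G,\overline{\mathrm{B}\Gamma}^r_{\dim G})$ as the homological model for $\overline{\BDiff^r_0(G)}$, so this is a matter of quoting the right version of the theorem rather than a genuine gap.
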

\begin{proof}
In fact, we show that the group $H_*(\overline{\BDiff^{r}_0(G)};\bZ)$ is trivial for $*\leq \text{dim}(G/K)+1$. Since $G$ is parallelizable, by Mather-Thurston's theorem we have the homology isomorphism
\[
\overline{\BDiff^{r}_0(G)}\to \text{Map}(G, \overline{\mathrm{B}\Gamma}_{\text{dim}(G)}).
\]
Since $G$ is homotopy equivalent to its maximal compact subgroup we have $ \text{Map}(G, \overline{\mathrm{B}\Gamma}_{\text{dim}(G)})\xrightarrow{\simeq}  \text{Map}(K, \overline{\mathrm{B}\Gamma}_{\text{dim}(G)})$. On the other hand, by Thurston's theorem we know that $\overline{\mathrm{B}\Gamma}_{\text{dim}(G)}$ is at least $\text{dim}(G)+1$ connected. Therefore, by the fact about homotopy groups of mapping spaces in subsection \ref{back}, $\text{Map}(K, \overline{\mathrm{B}\Gamma}_{\text{dim}(G)})$ is at least $ \text{dim}(G)-\text{dim}(K)+1$-connected. 
\end{proof}
\begin{cor}
For a complex semisimple Lie group $G$, the map
\[
(\alpha_r)_*\colon H_*(\mathrm{B}G^{\delta};\bQ)\to H_*(\BDiff^{r,\delta}_0(G);\bQ),
\]
is trivial for $*\leq \text{dim}(G/K)+1$.
\end{cor}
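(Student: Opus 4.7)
The strategy is a direct splice of the preceding theorem with Lemma \ref{collapse}. The theorem in fact establishes the stronger statement $H_*(\overline{\BDiff^r_0(G)};\bZ)=0$ for $*\leq \text{dim}(G/K)+1$, so $\overline{\alpha_r}_*$ is tautologically trivial on rational homology in this range of degrees. Only the passage from the ``barred'' map to $(\alpha_r)_*$ remains.

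For this passage I would invoke Lemma \ref{collapse}. The lemma already packages the Dupont-Brylinski collapse of the Serre spectral sequence for $G\to \overline{\mathrm{B}G}\to \mathrm{B}G^{\delta}$, which produces a surjection $H_*(\overline{\mathrm{B}G};\bQ)\twoheadrightarrow H_*(\mathrm{B}G^{\delta};\bQ)$ whenever $G$ is compact or complex semisimple. Combined with the natural commutative square
\[
\begin{tikzpicture}[node distance=1.5cm, auto]
  \node (A) {$H_*(\overline{\mathrm{B}G};\bQ)$};
  \node (B) [right of=A, node distance=4cm] {$H_*(\overline{\BDiff^r_0(G)};\bQ)$};
  \node (C) [below of=A]{$H_*(\mathrm{B}G^{\delta};\bQ)$};
  \node (D) [right of=C, node distance=4cm]{$H_*(\BDiff^{r,\delta}_0(G);\bQ),$};
  \draw [->>] (A) to node {$$}(C);
  \draw [->] (A) to node {$\overline{\alpha_r}_*$}(B);
  \draw [->] (C) to node {$(\alpha_r)_*$}(D);
  \draw [->] (B) to node {$$}(D);
\end{tikzpicture}
\]
the triviality of the top horizontal arrow together with surjectivity of the left vertical arrow forces $(\alpha_r)_*$ to vanish on all of $H_*(\mathrm{B}G^{\delta};\bQ)$ in the stated range of degrees.

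There is no genuine obstacle beyond the preceding theorem itself: the real content sits in the Mather-Thurston identification $\overline{\BDiff^r_0(G)}\simeq \text{Map}(G,\overline{\mathrm{B}\Gamma}_{\text{dim}(G)})$, the homotopy equivalence $G\simeq K$, and Thurston's $(\text{dim}(G)+1)$-connectivity of $\overline{\mathrm{B}\Gamma}_{\text{dim}(G)}$, which together yield the required connectivity of the mapping space. The complex semisimple hypothesis enters the corollary only through Lemma \ref{collapse}; without it, the rational surjection from $H_*(\overline{\mathrm{B}G};\bQ)$ onto $H_*(\mathrm{B}G^{\delta};\bQ)$ is unavailable and one cannot transfer the vanishing from $\overline{\alpha_r}$ to $\alpha_r$.
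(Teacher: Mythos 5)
Your argument is exactly the paper's intended one: the corollary is stated immediately after the theorem precisely so that it follows by combining the vanishing of $H_*(\overline{\BDiff^{r}_0(G)};\bZ)$ in degrees $*\leq \text{dim}(G/K)+1$ with Lemma \ref{collapse}, whose Dupont--Brylinski surjection $H_*(\overline{\mathrm{B}G};\bQ)\twoheadrightarrow H_*(\mathrm{B}G^{\delta};\bQ)$ and the natural commutative square transfer the triviality from $\overline{\alpha_r}$ to $\alpha_r$. No gaps; this matches the paper's proof.
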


Let us briefly remark that if we consider the corresponding maps in continuous cohomology theories, one gets a trivial map in all degrees. It is a well known theorem of van Est that the continuous cohomology $H^*_{cont}(G;\bR)$ is isomorphic to the relative Lie algebra cohomology $H^*(\mathfrak{g},\mathfrak{k})$ where $\mathfrak{k}$ is the Lie algebra of maximal compact subgroup. Since $ \overline{\mathrm{B}G}$ is the realization of the etale groupoid given by the action of $G^{\delta}$ on $G$, one can similarly define the continuous (smooth) cohomology $H^*_{cont}( \overline{ \mathrm{B}G};\bR)$ as in \cite{MR660658, MR494071}. And there is a version of  van Est which says that the continuous cohomology $H^*_{cont}( \overline{ \mathrm{B}G};\bR)$ is isomorphic to the Lie algebra cohomology $H^*(\mathfrak{g})$. Hence, we have a commutative diagram
\[
\begin{tikzpicture}[node distance=1.5cm, auto]
  \node (A) {$H^*_{cont}(\overline{ \mathrm{B}G};\bR)$};
  \node (B) [right of=A, , node distance=3cm] {$H^*(\mathfrak{g})$};
  \node (C) [below of=A]{$H^*_{cont}(G;\bR)$};
  \node (D) [right of=C, node distance=3cm]{$H^*(\mathfrak{g},\mathfrak{k}).$};
  \draw [<-] (A) to node {$\cong$}(B);
    \draw [<-] (A) to node {$$}(C);
  \draw [<-] (C) to node {$\cong$}(D);
  \draw [<-] (B) to node {$$}(D);
\end{tikzpicture}
\]
Similarly Brown-Szczarba (\cite{MR1295578}) proved that $H^*_{cont}(\overline{\BDiff^{r}_0(M)};\bR)$ is isomorphic to the continuous Lie algebra cohomology (aka Gelfand-Fuks cohomology) $H^*(\text{Vect}(M))$. So $\overline{\alpha_r}$ on the level of the continuous cohomology is the map
\[
H^*(\text{Vect}(G))\to H^*(\mathfrak{g}).
\]
Interestingly, this map is trivial in all degrees. Because, it is consequence of Bott-Segal's theorem (\cite{bott1977cohomology}) that $H^*(\text{Vect}(M))$ is trivial in degrees less than $\text{dim}(M)+1$ and $H^*(\mathfrak{g})$ is trivial by definition for degrees above $\text{dim}(G)$. So as a consequence, for a semisimple Lie group $G$,  the map between smooth group cohomologies
\[
H^*_{cont}(\Diff_0^r(G);\bR)\to H^*_{cont}(G;\bR),
\]
is trivial in all degrees.

Now back to group homology with integer coefficients, recall that we know that $H_1(\BDiff^{r,\delta}_0(G);\bZ)=0$ for all $r\neq \text{dim}(G)+1$. Hence, the first nontrivial homological degree that $\alpha_r$ could be nontrivial for $r\neq \text{dim}(G)+1$ is 
\[
(\alpha_r)_*\colon H_2(\mathrm{B}G^{\delta};\bZ)\to H_2(\BDiff^{r,\delta}_0(G);\bZ).
\]
Sah-Wagoner (\cite{MR646087}) proved that for any connected Lie group $G$, the second group homology $H_2(\mathrm{B}G^{\delta};\bZ)$ has a quotient group equal to a $\bQ$-vector space of dimension equal to the continuum. We consider the case where $G$ is abelian or $G=\text{SU}_2$.
\begin{thm}Let $G$ be a finite dimensional abelian connected Lie group or let it be $\text{SU}_2$ and $r\neq \text{dim}(G)+1$, then the induced map 
\[
H_2(\mathrm{B}G^{\delta};\bQ)\to H_2(\BDiff^{r,\delta}_0(G);\bQ)
\]
is trivial.
\end{thm}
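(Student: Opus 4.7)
The plan is to apply \Cref{collapse} to reduce from $\alpha_r$ to $\overline{\alpha_r}\colon \overline{\mathrm{B}G} \to \overline{\BDiff^r_0(G)}$, then to identify $H_2(\overline{\BDiff^r_0(G)};\bQ)$ explicitly via Mather--Thurston together with the connectivity of $\overline{\mathrm{B}\Gamma}_n$, and finally to show the image of $\overline{\alpha_r}_*$ in this group vanishes using the specific structure of $G$. For the compact cases $G = T^a$ and $G = \text{SU}_2$, \Cref{collapse} reduces us directly. For a non-compact abelian $G = T^a \times \bR^b$ with $b \geq 1$, the product decomposition $\overline{\mathrm{B}G} \simeq \overline{\mathrm{B}T^a} \times \mathrm{B}\bR^{b,\delta}$ (using $\mathrm{B}G \simeq \mathrm{B}T^a$ since $\bR^b$ is contractible) together with the compact torus case of \Cref{collapse} gives an analogous reduction; moreover the preceding theorem already yields $\overline{\alpha_r}_* = 0$ in degrees $\leq \dim(G/K) + 1 = b + 1 \geq 2$.

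Since $G$ is parallelizable, Mather--Thurston identifies $\overline{\BDiff^r_0(G)} \simeq_{H_*} \mathrm{Map}(G, \overline{\mathrm{B}\Gamma}_n)$ with $n = \dim G$. Thurston's $(n+1)$-connectivity of $\overline{\mathrm{B}\Gamma}_n$ combined with the splitting of the evaluation fibration $\Omega^3 Y \to \mathrm{Map}(S^3, Y) \to Y$ for $G = \text{SU}_2 \simeq S^3$, respectively the iterated $S^1$-splitting for the torus $T^a$, yields $\pi_k(\mathrm{Map}(S^3, Y)) \cong \pi_k(Y) \oplus \pi_{k+3}(Y)$ and $\pi_k(\mathrm{Map}(T^a, Y)) \cong \bigoplus_{j=0}^a \binom{a}{j} \pi_{k+j}(Y)$. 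Combining with the connectivity, we obtain
\[
H_2(\overline{\BDiff^r_0(G)}; \bQ) \cong \pi_{n+2}(\overline{\mathrm{B}\Gamma}_n) \otimes \bQ \cong H_{n+2}(\overline{\mathrm{B}\Gamma}_n; \bQ),
\]
the last isomorphism by rational Hurewicz. Geometrically, $\overline{\alpha_r}_*$ on $H_2$ sends a flat $G$-bundle over a surface $\Sigma$ with trivialized underlying topological structure to the foliated bordism class of the total space $\Sigma \times G$, an $(n+2)$-manifold carrying its codim-$n$ flat foliation with framed normal bundle.

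The main step, which is also the hardest, is to exhibit such a foliated null-bordism. For $G = T^a$, the total space admits a free right-translation $T^a$-action preserving the foliation with quotient $\Sigma$, realizing it as a principal $T^a$-bundle, and the rational vanishing of the bordism class can be extracted from Milnor's vanishing of $H_*(\mathrm{B}T^{a,\delta}; \bQ) \to H_*(\mathrm{B}T^a; \bQ)$ together with a concrete null-bordism construction using the abelian group structure. For $G = \text{SU}_2$, the $3$-connectivity of $\mathrm{B}\text{SU}_2$ ensures every underlying principal $\text{SU}_2$-bundle over $\Sigma$ is topologically trivial, and one extends the left-translation $\text{SU}_2$-action on $S^3$ across $D^4 \subset \bH$ via quaternionic multiplication to produce the bounding $6$-manifold; the origin of $D^4$ is a codimension-$4$ singular locus which (after blow-up) does not affect the foliated bordism class in $H_5(\overline{\mathrm{B}\Gamma}_3; \bQ)$. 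The main obstacle I anticipate is handling this singular behavior rigorously in the $\text{SU}_2$ case. If a direct geometric construction proves too delicate, one may instead argue by a rational detection statement: show the image lies in the kernel of every rational secondary characteristic class of codim-$n$ flat foliations, which in this low range is enough by Gelfand--Fuks and Morita-type detection theorems.
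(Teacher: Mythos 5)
Your framework (reduce via \Cref{collapse}, apply Mather--Thurston and the $(n+1)$-connectivity of $\overline{\mathrm{B}\Gamma}_n$, then show the class vanishes in $H_{n+2}(\overline{\mathrm{B}\Gamma}_n;\bQ)$ as a foliated bordism class) matches the paper's setup, and your treatment of the non-compact abelian case $G\cong T^a\times\bR^b$ with $b\geq 1$ by invoking the preceding theorem in degrees $*\leq \dim(G/K)+1 = b+1$ is actually a clean alternative to the paper's argument, which instead writes $\mathrm{B}(\bR^k)^\delta\to\mathrm{B}\text{Aff}(\bR^k)^\delta$ and uses the Quillen-type fact that $\text{Aff}(\bR^k)\to\text{GL}_k(\bR)$ is a rational group-homology isomorphism. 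So that part is a legitimate different route.

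However, there is a genuine gap in both remaining cases, and it is precisely at the point you flag as ``the main step.'' For $G=T^a$ you write that ``the rational vanishing of the bordism class can be extracted from Milnor's vanishing \dots together with a concrete null-bordism construction using the abelian group structure,'' but no such construction is given, and Milnor's vanishing theorem does not by itself produce a foliated null-bordism. For $G=\text{SU}_2$ the proposed extension of left translation across $D^4\subset\bH$ is not a proof: the ``foliation'' on $\Sigma\times D^4$ fails to be a foliation along $\Sigma\times\{0\}$, and ``blowing up'' does not obviously produce a bounding \emph{foliated} manifold without changing the bordism class; your own hedging confirms this. The fallback via secondary characteristic class detection is a hope, not an argument, since there is no detection theorem guaranteeing faithfulness of rational secondary classes on $H_{n+2}(\overline{\mathrm{B}\Gamma}_n;\bQ)$.

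The two ideas the paper uses here, which your proposal is missing, are: (i) Mather's theorem that $H_2(\mathrm{B}(S^1)^\delta;\bZ)\to H_2(\mathrm{B}(\text{SU}_2)^\delta;\bZ)$ is surjective, which reduces the $\text{SU}_2$ case to cycles whose holonomy lands in a maximal torus; and (ii) the observation going back to Moussu that foliations defined by Pfaffian systems of \emph{closed} $1$-forms are concordant, applied to $\omega_i = dz_i - (a_i\,dx+b_i\,dy)=0$. With abelian (in fact rotation) holonomy, the flat foliation on $G\times T^2$ is defined by closed $1$-forms, hence is concordant to the horizontal foliation $dz_i=0$, which manifestly bounds; this handles both $T^a$ and (via Mather's reduction) $\text{SU}_2$ uniformly. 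Without something like this, your outline does not close.
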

\begin{proof}
First let us consider the abelian case. If $G$ is not compact, then $G\cong \bR^k\times T^n$ for some $k>0$ and the group homology $H_*(\mathrm{B}(\bR^k)^{\delta}\times \mathrm{B}(T^n)^{\delta};\bQ)\cong H_*(\mathrm{B}(\bR^k)^{\delta};\bQ)\times H_*(\mathrm{B}(T^n)^{\delta};\bQ)$. We show that cycles in $H_*(\mathrm{B}(\bR^k)^{\delta};\bQ)$ map trivially into $H_2(\BDiff^{r,\delta}_0(G);\bQ)$. Note that $\mathrm{B}(\bR^k)^{\delta}\to \BDiff^{r,\delta}_0(G)$ factors as follows
\[
\mathrm{B}(\bR^k)^{\delta}\xrightarrow{\beta} \mathrm{B}(\text{Aff}(\bR^k))^{\delta}\to \BDiff^{r,\delta}_0(G). 
\]
There is a trick that apparently goes back to Quillen that the group homomorphism $\text{Aff}(\bR^k)\to \text{GL}_k(\bR)$ induces an isomorphism on rational group homology (see \cite[Lemma 4]{de1983acyclic}). Therefore, the map $\beta$ induces a trivial map on $H_*(-;\bQ)$. 

So we assume that $G=T^k$.  We shall first consider the case $k=1$. Geometrically, any $2$-cycle in $H_2(S^1;\bZ)$ is represented by a flat $S^1$-bundle over the $2$-torus. Equivalently, on the total space which is diffeomorphic to $T^3$ we have  a foliation transverse to the $S^1$-fibers whose holonomy is given by a representation $\rho\colon\pi_1(T^2)\to \text{Rot}(S^1)$ of fundamental group of the base into the rotations of $S^1$. Such a foliation is given by the integrable form $\omega=dz-(a.dx+b.dy)$ where $z$ is the coordinate of the fiber and $x$ and $y$ are the coordinates of the base. But not only $\omega$ is integrable but also it is closed. We learned from \cite[Page 145]{moussu1974relations} that two codimension $1$-foliations on $M$ that are defined by closed $1$-forms $\omega_1$ and $\omega_2$ are in fact concordant. Because we can consider the foliation on $M\times [0,1]$ that is defined by the integrable $1$-form $dt+f_0(t).\omega_1+f_1(t).\omega_1$ where $f_i$ are smooth functions on the real line such that  $\text{Supp}(f_0(t))\subset [0,1/4]$ and $\text{Supp}(f_1(t))\subset [3/4,1]$. So the foliation defined by $\omega$ is concordant to the foliation defined by $\omega'=dz$. But the foliation defined by $\omega'$ is foliated cobordant to zero since it is trivial horizontal foliation on $S^1\times T^2$ which bounds the horizontal foliation on $S^1\times (D^2\times S^1)$. Hence, the foliation defined by $\omega$ gives a trivial cycle in $H_3(\mathrm{B}\Gamma^r_1;\bZ)$. 

On the other hand, by Mather-Thurston's theorem $H_2(\BDiff_0^{r,\delta}(S^1);\bZ)\cong \bZ\oplus H_3(\mathrm{B}\Gamma^r_1;\bZ)$ where the $\bZ$ summand is detected by the Euler class and $H_3(\mathrm{B}\Gamma^r_1;\bZ)\cong H_2(\overline{\BDiff_0^{r,\delta}(S^1)};\bZ)$. But the Euler class of the foliation defined by $\omega$ on $S^1\times T^2$ is trivial and by the above argument it is also a trivial cycle in $H_3(\mathrm{B}\Gamma^r_1;\bZ)$. Hence, the map 
\[
H_2(\mathrm{B}(S^1)^{\delta};\bZ)\to H_2(\BDiff_0^{r,\delta}(S^1);\bZ),
\]
is trivial. 

Now for the case $G=T^n$, recall from the proof of \Cref{collapse} that $H_2(\overline{\mathrm{B}(T^n)};\bQ)\to H_2(\mathrm{B}(T^n)^{\delta};\bQ)$ is surjective. So instead we shall prove that 
\[
H_2(\overline{\mathrm{B}(T^n)};\bQ)\to H_2(\overline{\BDiff_0^r(T^n)};\bQ),
\]
is a trivial map. Recall that by Mather-Thurston's theorem and the connectivity of $\overline{\mathrm{B}\Gamma}^r_{n}$, for $r\neq \text{dim}(G)+1$, so similar to isomorphisms in (\ref{MT}), we have 
\[
H_2(\overline{\BDiff_0^r(T^n)};\bZ)\cong H_2(\text{Map}(T^n,\overline{\mathrm{B}\Gamma}^r_{n});\bZ)\cong H_{n+2}(\overline{\mathrm{B}\Gamma}^r_{n};\bZ).
\]
Therefore, it is enough to show that any $2$-cycle in $H_2(\overline{\mathrm{B}(T^n)};\bQ)$ which is a trivialized flat $T^n$-bundle over $T^2$ maps trivially into $H_{n+2}(\overline{\mathrm{B}\Gamma}^r_{n};\bQ)$, which in turn follows if we  show that such flat $T^n$-bundles over $T^2$ are trivial in the foliated cobordism group. Since the holonomy group $\rho(\bZ^2)<T^n$, the foliation is given similarly to the previous case by the Pfaffian system
\begin{align*}
\omega_i=dz_i- (a_i.dx+b_i.dy) & = 0\text{ for all }i,
\end{align*}
where $z_i$ are coordinates of $T^n$ fiber and $x$ and $y$ are coordinates of $T^2$ base. Again since these are all closed one forms, this foliation is foliated cobordant to the horizontal foliation on $T^n\times T^2$ given by $dz_i=0$. But the horizontal foliation is trivial in foliated cobordism group so the image of this $2$-cycle is trivial in $H_2(\overline{\BDiff_0^r(T^n)};\bZ)$.

Now let $G=\text{SU}_2$. Mather proved in a letter to Sah (\cite{Matherletter}, see also \cite{alperin1979k2}) that 
\[
H_2(\mathrm{B}(S^1)^{\delta};\bZ)\to H_2(\mathrm{B}(\text{SU}_2)^{\delta};\bZ),
\]
is surjective. So any $2$-cycle in $H_2(\mathrm{B}(\text{SU}_2)^{\delta};\bZ)$ can be represented by a flat $\text{SU}_2$-bundle over $T^2$ whose holonomy group $\rho\colon \bZ^2\to \text{SU}_2$ lies in the maximal torus $S^1<\text{SU}_2$. In particular, the holonomy lies in a subgroup generated on a one-parameter subgroup given by the flow of 
\begin{equation*}
u_3=
\begin{bmatrix}
i& 0 \\
0 & -i 
\end{bmatrix}
\end{equation*}
in the Lie algebra $\mathfrak{su}(2)$.  Let $u_1$ and $u_2$ be the other generators of  $\mathfrak{su}(2)$. So any element in $H_2(\mathrm{B}(\text{SU}_2)^{\delta};\bZ)$ can be represented by a foliation on $\text{SU}_2\times T^2$ that is defined by the Pfaffian system 
\begin{align*}
\omega_3=dz_3- (a.dx+b.dy) & = 0\\
\omega_2=dz_2& =0\\
\omega_1=dz_1& =0
\end{align*}
where $dz_i$ are one forms dual to $u_i$ in the Lie algebra $\mathfrak{su}(2)$.  Since this Pfaffian system is given by the vanishing of closed forms, by a similar argument for codimension $1$-foliations (\cite[Page 145]{moussu1974relations}), it is foliated cobordant to a foliation given by $dz_i=0$ for all $i$ which is the horizontal codimension $3$ foliation on $\text{SU}_2\times T^2$. Therefore, it is trivial in the foliated cobordism group. 
\end{proof}
The case of $G=\text{\textnormal{SU}}_2$ is particularly interesting because its third group homology is also known and there are non-abelian cycles in degree $3$. In particular, it is known (\cite[Corollary 9.19]{MR1832859}) that $H_3(\mathrm{B}\text{\textnormal{SU}}_2^{\delta};\bZ)$ is isomorphic to $\bQ/\bZ$ plus a nontrivial $\bQ$ vector space which is subspace of the scissors congruence $\mathcal{P}_{\bC}$.  The summand $\bQ/\bZ$ is again generated by abelian cycles but the other summand that is detected by the scissors congruence group is not generated by abelian cycles and \Cref{HT} predicts that these cycles should be trivial too in $H_3(\BDiff^{r,\delta}_0(S^3);\bZ)$. In this direction, Reznikov (\cite[Theorem 6.6]{MR1694899}) proved that Chern-Simons classes in $H^3(\mathrm{B}\text{\textnormal{SO}}_4(\bR)^{\delta};\bR/\bZ)$ lift to $H^3(\BDiff^{vol,\delta}(S^3);\bR/\bZ)$ where the group $\Diff^{vol,\delta}_0(S^3)$ is the volume preserving diffeomorphisms of $S^3$ made discrete. However, because of \Cref{HT}, we expect that they cannot be further lifted to $H^3(\BDiff^{\delta}_0(S^3);\bR/\bZ)$.

\begin{conj} Chern-Simons classes are in the cokernel of the map 
\[
H^3(\BDiff^{\delta}_0(S^3);\bR/\bZ)\to H^3(\mathrm{B}\text{\textnormal{SO}}_4(\bR)^{\delta};\bR/\bZ).
\]
\end{conj}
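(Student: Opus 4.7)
The plan is to reduce the conjecture to \Cref{HT} in degree $3$ for $M=S^3$ and then invoke the classical fact that Cheeger--Chern--Simons classes are secondary invariants not pulled back from the continuous classifying space. By the generalized Smale conjecture, the inclusion $\text{SO}_4(\bR)\hookrightarrow \Diff_0(S^3)$ is a homotopy equivalence, so in the commutative square
\[
\begin{tikzcd}
\mathrm{B}\text{SO}_4(\bR)^{\delta} \arrow[r] \arrow[d] & \BdDiff_0(S^3) \arrow[d,"\eta"] \\
\mathrm{B}\text{SO}_4(\bR) \arrow[r,"\simeq"] & \BDiff_0(S^3)
\end{tikzcd}
\]
the bottom horizontal arrow is a weak equivalence.

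Suppose \Cref{HT} holds in degree $3$ for $M=S^3$, i.e.\ $\eta_\ast\colon H_3(\BdDiff_0(S^3);\bZ)\to H_3(\BDiff_0(S^3);\bZ)$ is an isomorphism. Because $\bR/\bZ$ is divisible and hence injective as an abelian group, the universal coefficient theorem reduces to $H^3(X;\bR/\bZ)\cong\mathrm{Hom}(H_3(X;\bZ),\bR/\bZ)$, so $\eta^*$ is an isomorphism on $H^3(-;\bR/\bZ)$. Chasing the square, the image of $H^3(\BdDiff_0(S^3);\bR/\bZ)$ inside $H^3(\mathrm{B}\text{SO}_4(\bR)^{\delta};\bR/\bZ)$ therefore coincides with the image of $H^3(\mathrm{B}\text{SO}_4(\bR);\bR/\bZ)$ under the natural discretization map.

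To finish, I would verify that the Cheeger--Chern--Simons refinement $\widehat{p}_1\in H^3(\mathrm{B}\text{SO}_4(\bR)^{\delta};\bR/\bZ)$ of $p_1$ is not in this image. The Bockstein sequence for $0\to\bZ\to\bR\to\bR/\bZ\to 0$ combined with the vanishing of $H^3(\mathrm{B}\text{SO}_4(\bR);\bR)$ identifies $H^3(\mathrm{B}\text{SO}_4(\bR);\bR/\bZ)$ with the torsion of $H^4(\mathrm{B}\text{SO}_4(\bR);\bZ)$, which is a finite $2$-group. But Reznikov's theorem (\cite{MR1694899}) shows that $\widehat{p}_1$ takes values of infinite order in $\bR/\bZ$ on flat bundles arising from closed hyperbolic $3$-manifolds (via $\mathrm{PSL}_2(\bC)\hookrightarrow\mathrm{SO}_4(\bR)$), so it cannot lie in the torsion image of a continuous class.

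The main obstacle is of course the appeal to \Cref{HT} for $S^3$ in degree $3$, which is open in the smooth category. An automatic unconditional statement is that $\eta^*$ is \emph{injective} on $H^3(-;\bR/\bZ)$, since in the Serre spectral sequence for $\overline{\BDiff_0(S^3)}\to \BdDiff_0(S^3)\to\BDiff_0(S^3)$ no differential can leave $E_2^{3,0}$; but injectivity alone does not suffice. Removing the conditional would amount to producing, for every flat $\mathrm{SO}_4(\bR)$-bundle over a closed $3$-manifold with nontrivial Chern--Simons invariant, an explicit foliated nullcobordism through flat $\Diff_0(S^3)$-bundles --- precisely the geometric content of \Cref{HT} in this instance, and the genuine difficulty.
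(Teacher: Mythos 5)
The statement you are addressing is stated in the paper as a \emph{conjecture}: the author offers no proof, only the remark that it is what \Cref{HT} predicts, given Reznikov's lift of the Chern--Simons classes to $H^3(\BDiff^{vol,\delta}(S^3);\bR/\bZ)$. Your proposal is accordingly not a proof but a conditional reduction, and you say so yourself: the entire argument hinges on \Cref{HT} in degree $3$ for $S^3$, which is open in the smooth category. That reduction (Hatcher's $\Diff_0(S^3)\simeq \mathrm{SO}_4(\bR)$, the universal coefficient argument with the injective group $\bR/\bZ$, and the diagram chase identifying the image of $\eta^*$ with the image of the continuous classes) is correct and is exactly the heuristic the paper has in mind; but it does not close the gap the conjecture is asking about.

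Two further points in your unconditional steps would fail as written. First, $\mathrm{PSL}_2(\bC)\cong \mathrm{SO}^+(3,1)$ is a noncompact simple group and does not embed in the compact group $\mathrm{SO}_4(\bR)$, so hyperbolic $3$-manifolds cannot be used this way; nor is infinite order of $\widehat p_1$ on flat $\mathrm{SO}_4(\bR)$-bundles what Reznikov's cited theorem asserts (that is essentially the Cheeger--Simons rationality question for compact structure groups). What you actually need is only that $\widehat p_1$ is not killed by $2$ in $H^3(\mathrm{B}\mathrm{SO}_4(\bR)^{\delta};\bR/\bZ)$, since the image of the continuous classes is the $2$-torsion of $H^4(\mathrm{B}\mathrm{SO}_4(\bR);\bZ)$; this follows already from flat $\mathrm{SO}_2$-reducible bundles over lens spaces, whose Chern--Simons invariants realize elements of $\bQ/\bZ$ of arbitrary order. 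Second, your claimed ``automatic injectivity'' of $\eta^*$ on $H^3(-;\bR/\bZ)$ is not justified: the kernel of the edge map $H^3(\BDiff_0(S^3);\bR/\bZ)\to H^3(\BdDiff_0(S^3);\bR/\bZ)$ is controlled by differentials \emph{into} $E^{3,0}$, in particular $d_3\colon E_3^{0,2}\to E_3^{3,0}$ with $E_2^{0,2}=H^2(\overline{\BDiff_0(S^3)};\bR/\bZ)$, which is precisely the (dual of the) differential the paper works hard to control in \Cref{H_3} and which is not known to vanish here.
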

 \bibliographystyle{alpha}
\bibliography{reference}
\end{document}